\title{Borel Tukey morphisms and combinatorial cardinal invariants of
  the continuum}
\author{Samuel Coskey}
\address{Samuel Coskey, Department of Mathematics, Boise State
  University, 1910 University Dr, Boise, ID 83725-1555 (Formerly
  at York University, Toronto, Canada)}
\email{scoskey@nylogic.org}
\urladdr{boolesrings.org/scoskey}
\author{Tam\'as M\'atrai}
\address{Tam\'as M\'atrai, Alfr\'ed R\'enyi Matematikai
  Kutat\'oint\'ezet, Magyar Tudom\'anyos Akad\'emia, 13-15
  Re\`altanoda utca, H-1053 Budapest, Hungary}
\email{matrait@renyi.hu}
\author{Juris Stepr\=ans}
\address{Juris Stepr\=ans, York University Department of Mathematics
  and Statistics, N520 Ross, York University, 4700 Keele St., Toronto,
  ON M3J 1P3, Canada}
\email{steprans@yorku.ca}
\thanks{The authors were partially supported by NSERC}
\newcommand{\vojtas}{Vojt\'a\v{s}}
\newcommand{\fr}[1]{\ensuremath{\mathfrak{#1}}}
\newcommand{\boxhead}[1]{\vspace{6pt}\noindent\fbox{#1}\ }
\newcommand{\A}{\mathsf{A}}
\newcommand{\B}{\mathsf{B}}
\begin{document}

\begin{abstract}
  We discuss the Borel Tukey ordering on cardinal invariants of the
  continuum.  We observe that this ordering makes sense for a larger
  class of cardinals than has previously been considered.  We then
  provide a Borel version of a large portion of van~Douwen's diagram.
  For instance, although the usual proof of the inequality $\fr
  p\leq\fr b$ does not provide a Borel Tukey map, we show that in fact
  there is one.  Afterwards, we revisit a result of Mildenberger
  concerning a generalization of the unsplitting and splitting
  numbers.  Lastly, we show that the inclusion ordering on $\mathcal
  P(\omega)$ embeds into the Borel Tukey ordering on cardinal
  invariants.
\end{abstract}

\maketitle

\section{Introduction}

Cardinal invariants of the continuum are cardinal numbers which are
determined by families of real numbers (or any similar continuum such
as $\mathcal P(\omega)$, the set of subsets of the natural numbers).
For instance, the least size of a Lebesgue non-null set is a cardinal
invariant, one of many derived from properties of measure and
category.  A second example is the least size of a family of sequences
of natural numbers such that any other sequence is eventually
dominated by one from the family.  This example is one of several
which are known as \emph{combinatorial} cardinal invariants.

As with each of these examples, most classical cardinal invariants
take on values between $\aleph_1$ and $\fr c$.  The particular values
can vary from one model of set theory to another; for instance, in a
model of \textsf{CH} they always have value $\aleph_1=\fr c$.  But the
pattern of values is not arbitrary: there exist deep connections
between them which dictate that certain inequalities must hold in any
model of set theory.  We refer the reader to Andreas Blass's excellent
article \cite{blass} in the Handbook of Set Theory for a survey of
this rich area of research.

Meanwhile, in this article we will be interested in a categorical
approach to cardinal invariants and their inequalities which is due to
\vojtas.  See \cite{voj}, or Section~4 of \cite{blass} for a more
detailed account.  This approach rests on on the following definition
scheme for cardinal invariants.  A \emph{\vojtas\ triple} is some
$\bm{A}=(A_-,A_+,\A)$, where $\A$ is a relation from $A_-$ to $A_+$
(that is $A\subset A_-\times A_+$).  The cardinal invariant of the
continuum corresponding to such a triple $\bm{A}$ is defined by:
\[\norm{\bm{A}}\defeq\min\set{\abs{\mathcal F}:
  \mathcal F\text{ is a dominating family with respect to }\bm{A}}\;.
\]
Here, a subset $\mathcal F\subset A_+$ is said to be a
\emph{dominating family} with respect to $\bm{A}$ iff for all $x\in
A_-$ there exists $y\in\mathcal F$ such that $x\mathrel{\A}y$.  The
simplest example is the dominating number $\fr d$, which was described
in the first paragraph.  It is easy to see that $\fr d$ is the
cardinal invariant corresponding to the triple
$(\omega^\omega,\omega^\omega,\leq^*)$, where $\omega^\omega$ denotes
the space of sequences of natural numbers and $\leq^*$ denotes the
eventual domination relation.

We will be interested in the \vojtas\ triples themselves, and not the
corresponding cardinal invariants.  This is really a separate pursuit,
since it is clear that many different \vojtas\ triples may be used to
define the same cardinal number.  Of course many triples do not define
interesting invariants, but our study may be more compelling when they
do.

The natural maps between \vojtas\ triples are (generalized) Tukey
morphisms.  If $\bm{A}$ and $\bm{B}$ are \vojtas\ triples, then a
\emph{Tukey morphism} (or just \emph{morphism}) from $\bm{A}$ to
$\bm{B}$ is a pair of maps:
\[\begin{cases}\phi\colon B_-\to A_-\\\psi\colon A_+\to B_+\end{cases}
\]
such that for all $b_-\in B_-$ and $a_+\in A_+$,
\[\phi(b_-)\mathrel{\A} a_+ \implies b_-\mathrel{\B}\psi(a_+)\;.
\]
In particular, if $(\phi,\psi)$ is a morphism from $\bm{A}$ to
$\bm{B}$ and $\mathcal F$ is a dominating family with respect to
$\bm{A}$, then $\psi(\mathcal F)$ is a dominating family family with
respect to $\bm{B}$.  But the existence of a morphism entails more
than just this.  For instance, the symmetry in the definitions leads
to a notion of duality for triples and morphisms.  If $(\phi,\psi)$ is
a morphism from $\bm{A}$ to $\bm{B}$, then $(\psi,\phi)$ is a morphism
from $\bm{B}^\perp$ to $\bm{A}^\perp$, where $\bm{A}^\perp$ is the
triple defined by $(A_+,A_-,\breve\A)$ and $a\mathrel{\breve\A}a'$ iff
$a'\not\mathrel{\A}a$.

As a consequence of the observation that morphisms send dominating
families to dominating families, it follows that if there is a
morphism from $\bm{A}$ to $\bm{B}$ then
$\norm{\bm{A}}\geq\norm{\bm{B}}$.  Just as cardinal inequalities can
be forced to hold or fail, Tukey morphisms between triples can be
forced to exist or not.  However, assuming some amount of definability
on the triples and morphisms involved, one can use morphisms to
establish absolute cardinal inequalities.

\begin{defn}\
  \label{def:borel}
  \begin{itemize}
  \item The \vojtas\ triple $(A_-,A_+,\A)$ is called \emph{Borel} iff
    $A_-$ and $A_+$ are Borel subsets of Polish spaces, and $\A$ is a
    Borel relation.
  \item If $\bm{A}$ and $\bm{B}$ are Borel, then we say that a
    morphism $(\phi,\psi)$ from $\bm{A}$ to $\bm{B}$ is \emph{Borel}
    if both $\phi$ and $\psi$ are Borel functions.
  \end{itemize}
\end{defn}

If there is a Borel morphism from $\bm{A}$ to $\bm{B}$ then we write
$\bm{A}\geq_{BT}\bm{B}$.  Borel morphisms were initially studied by
Blass \cite{blass-borel}, who first noted that they resolve the
absoluteness problem mentioned above.  Indeed, if there exists a Borel
Tukey morphism from $\bm{A}$ to $\bm{B}$, then the corresponding
cardinal inequality $\norm{\bm{A}}\geq\norm{\bm{B}}$ is absolute to
forcing extensions.  Additionally, Blass was motivated by some more
subtle applications of Borel morphisms.  For instance, consider the
cardinal equalities $\fr r_m=\fr r_n$ for all $m,n$, where here $\fr
r_n$ denotes the \emph{$n$-unsplitting number}: the least cardinality
of a family of reals such that every coloring $c\in m^\omega$ is
almost constant on some member of the family.  In other words, $\fr
r_n$ is defined by the triple
$(n^\omega,[\omega]^\omega,\mathsf{R}_n)$, where
$c\mathrel{\mathsf{R}}_nB$ iff $c$ is almost constant on $B$.  (Thus
$\fr r_2$ is just the usual unsplitting number $\fr r$; see the next
section.)  The proofs of the inequalities $\fr r_m\geq\fr r_n$ for
$2\leq m<n$ can be seen as involving an operation on \vojtas\ triples
called \emph{sequential composition}.  Blass conjectured that for
$2\leq m<n$ the inequality $\fr r_m\geq\fr r_n$ is not witnessed by a
Borel morphism, which we would take to mean that sequential
composition is necessary to prove the inequality.

Since Blass's initial study, however, there have been just a couple of
results on Borel morphisms.  Blass's conjecture concerning $\fr r_n$
was established by Mildenberger, who showed in \cite{mildenberger}
that there are no such Borel morphisms.  Another step was taken in
\cite{paw}, where the authors show that after suitably coding the null
and meager ideals, all of the inequalities in Cicho\'n's diagram are
witnessed by Borel (in fact continuous) morphisms.  On the other hand,
there are a growing number of applications of Borel morphisms
appearing in the literature.  See, for instance, the body of recent
work on parametrized diamond principles initiated in
\cite{parametrized}, or the results in Borel equivalence relations
found in \cite{invar}.

In this paper we wish to renew an interest in the systematic study of
the relationships between cardinal invariants with respect to Borel
morphisms.  We would also like to propose a mild generalization of
this study to certain cardinal invariants which are not definable from
\vojtas\ triples alone.  To see what we mean, consider the almost
disjointness number $\fr a$.  This cardinal is the least size of a
family which is not only dominating with respect to $\not\perp$, but
which is \emph{also} almost disjoint.  Presently, we show how to
handle cardinals which are definable in this more general sense.
(This is motivated in part by Zapletal's \cite{zap}, where such
cardinals are discussed and handled collectively.)

\begin{defn}
  \label{def:cardinal}
  If $\bm{A}$ is a \vojtas\ triple and $P$ is an arbitrary property of
  subsets of $A_+$, then the cardinal invariant of the continuum
  corresponding to $\bm{A}$ and $P$ is
  \[\norm{A}_P\defeq\min\set{\abs{\mathcal F}:\mathcal F\text{
      satisfies property $P$ and is a
      dominating family with respect to $\bm{A}$}}
  \]
\end{defn}

In the case that $P$ is trivial, that is, the cardinal is definable
from a \vojtas\ triple alone, we say that the cardinal is
\emph{simple}.  Again, it makes sense to define Tukey morphisms
between cardinals which are not simple.

\begin{defn}
  \label{def:morphism}
  If $\bm{A}$ and $\bm{B}$ are \vojtas\ triples and $P$ and $Q$ are
  properties, then a morphism from $\bm{A},P$ to $\bm{B},Q$ is a pair
  of maps $\phi\colon B_-\to A_-$ and $\psi\colon A_+\to B_+$
  satisfying:
  \begin{enumerate}
  \item If $\mathcal F$ satisfies property $P$ then $\psi(\mathcal F)$
    satisfies property $Q$, and
  \item $\phi(b_-)\mathrel{\A} a_+ \implies b_-\mathrel{\B}\psi(a_+)$.
  \end{enumerate}
\end{defn}

We are proposing to study Borel Tukey morphisms between a number of
cardinal invariants definable from some $\bm{A}$ and $P$, a more
ambitious plan than that of \cite{blass}.  This extension was proposed
by Coskey and Schneider, who encountered the problem in a slightly
different context \cite{invar}.  By allowing cardinal definitions
where $P$ is nontrivial, we open the door for many important new
cardinals to be compared with respect to Borel morphisms.  For
instance, we can now incorporate into the Borel Tukey order several
new entries from the van~Douwen diagram of combinatorial cardinal
invariants.

We should address the common objection to this programme that the
existence of a Borel morphism is much stronger than is needed to prove
the corresponding cardinal inequality.  To answer this, simply note
that the above-mentioned applications of Borel morphisms to
parametrized diamond principles and Borel equivalence relations cannot
be established on the basis of cardinal inequalities alone.  Thus in
these areas the Borel Tukey order serves as a dictionary of positive
results.  Moreover, results concerning Borel morphisms can have
combinatorial value.  For instance, Mildenberger's discovery that
there is no Borel morphism from $\fr r_m$ to $\fr r_n$ for $m<n$ can
be viewed as new information concerning measurable colorings of
$\omega$ and homogeneity.

This paper is organized as follows.  In the next section, we will
establish a Borel version of van Douwen's diagram.  The third section
is devoted to the proof of just one of the edges in this diagram: the
construction of a morphism from $\fr p$ to $\fr b$.  In the fourth
section, we do for splitting numbers what Blass and Mildenberger did
for unsplitting numbers: we define \emph{$n$-splitting numbers} $\fr
s_n$ and prove an analog of Mildenberger's theorem.  We also consider
the infinite versions $\fr r_\sigma$ and $\fr s_\sigma$ of the
unsplitting and splitting numbers.  In the final section we give a
method for constructing arbitrary patterns in the Borel Tukey order.

\section{A Borel van Douwen diagram}

In this section, we consider the cardinal invariants in van Douwen's
diagram which can be naturally defined using \vojtas\ triples.
Specifically, we consider the cardinal invariants shown in
Figure~\ref{fig:vd}.  The aim is to produce a ``Borel version'' of van
Douwen's diagram, with arrows only in the case that the cardinal
inequalities are witnessed by Borel morphisms.

\begin{figure}[h]
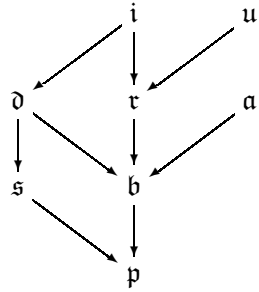

  \caption{Provable size relationships among some combinatorial
    cardinal invariants.  (Here, $\rightarrow$ means
    $\geq$.)\label{fig:vd}}
\begin{displaymath}
\begin{diagram}[h=1.5em,w=2em]
      &       & \fr i &       & \fr u \\
      & \ldTo & \dTo  & \ldTo &      \\
\fr d &       & \fr r &       & \fr a \\
\dTo  & \rdTo & \dTo  & \ldTo &      \\
\fr s &       & \fr b &       &      \\
      & \rdTo & \dTo  &       &      \\
      &       & \fr p &       &      \\
\end{diagram}
\end{displaymath}
\end{figure}

Since each cardinal invariant can be defined by several different
\vojtas\ triples, the answer to the question of whether a cardinal
inequality is witnessed by a Borel morphism will vary depending on the
choice of triples.  We shall take the approach of choosing at our
discretion a particularly natural triple defining to each of the
cardinal invariants in Figure~\ref{fig:vd}.  Afterwords, we can
conflate without confusion the cardinal invariants with their chosen
defining triples.  Thus, the meaning of a shorthand such as $\fr
d\geq_{BT}\fr b$ may be resolved by examining the definitions in
Table~\ref{tab:triples}, below.

Because the table is displayed compactly, it is necessary to explain
some of the terminology.  First, recall that a function $c\in2^\omega$
is said to \emph{split} the infinite set $A$ if $c\restriction A$
takes both values infinitely often.  Next, $IC$ denotes the family of
infinite/co-infinite subsets of $\omega$, and $\perp$ denotes the
relation ``is almost disjoint from''.  Finally, in row $\fr i$, the
property $P(\mathcal F)$ should actually say that ``$\mathcal F$ is
derived from an independent family by taking all intersections of
finitely many sets or their complements''.

\begin{table}[h]
\begin{onehalfspacing}
  \caption{Natural definitions for the cardinal invariants shown in
    Figure~\ref{fig:vd}.\label{tab:triples}}
\begin{tabular}{ccccc}
  cardinal& $A_-$ & $A_+$ & $\A$ & $P=$ ``$\mathcal F$ is \ldots''\\\hline
  $\fr p$ & $[\omega]^\omega$ & $[\omega]^\omega$ & $\not\subset^*$ & centered\\
  $\fr s$ & $[\omega]^\omega$ & $2^\omega$ & is split by & --\\
  $\fr r$ & $2^\omega$ & $[\omega]^\omega$ & does not split & --\\
  $\fr b$ & $\omega^\omega$ & $\omega^\omega$ & $\not\geq^*$ & -- \\
  $\fr d$ & $\omega^\omega$ & $\omega^\omega$ & $\leq^*$ & -- \\
  $\fr a$ & $IC$ & $IC$ & $\not\perp$ & a.\ d., infinite\\
  $\fr i$ & $IC$ & $IC$ & does not split & (see discussion)\\
  $\fr u$ & $[\omega]^\omega$ & $[\omega]^\omega$ & does not split & centered\\
\end{tabular}
\end{onehalfspacing}
\end{table}

We now consider in turn each edge of the diagram.  First, there are a
number of easy answers to be reaped.

\boxhead{$\fr d\rightarrow\fr b$} Since dominating families are
unbounded, this is just a trivial morphism.

\boxhead{$\fr d\rightarrow\fr s$} The classical proof can be seen as a
morphism proof.  See \cite{blass}, Theorem~3.3 and the corresponding
discussion in Section~4 of that article.

\boxhead{$\fr r\rightarrow\fr b$} This is dual to $\fr d\geq\fr s$.

\boxhead{$\fr u\rightarrow\fr r$} The identity maps clearly work.

\boxhead{$\fr i\rightarrow\fr r$} The identity maps clearly work, except that
we technically must define the behavior of $\phi$ on the finite and
cofinite sets.  In fact, if $B$ is cofinite then we can let
$\phi(B)\in[\omega]^\omega$ be arbitrary.

\boxhead{$\fr i\rightarrow\fr d$} There do not exist Borel such maps.
Indeed, suppose that $(\phi,\psi)$ were such a morphism.  Then in
particular, $\phi$ and $\psi$ satisfy
\[\phi(f)\text{ does not split } A\implies f\leq\psi(A)\;.
\]
This implies that $(\phi,\psi)$ is a Borel morphism from $\fr r$ to
$\fr d$.  Now, it is well-known that the inequality $\fr r\geq\fr d$
can be violated in a forcing extension (for instance in the Miller
model), and hence in this extension we have that $\fr
r\not\geq_{BT}\fr d$.  But since both $\fr r$ and $\fr d$ are simple,
the fact that $(\phi,\psi)$ is a morphism from $\fr r$ to $\fr d$
would be preserved to the forcing extension, a contradiction.

\begin{question}
  Is it possible to tinker with the definition of $\fr i$ in such a
  way that this question becomes nontrivial?
\end{question}

\boxhead{$\fr a\rightarrow\fr b$} 
This question is somewhat trivial, since it is easy to see that there
does not exist any morphism (let alone a Borel one) from $\fr a$ to
$\fr b$.

\begin{prop}
  \label{prop:ab}
  There do not exist maps $\phi\colon\omega^\omega\to IC$ and
  $\psi\colon IC\to\omega^\omega$ satisfying
  \[\phi(f)\not\perp A\implies f\not\geq^*\psi(A)\;.
  \]
\end{prop}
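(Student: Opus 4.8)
The plan is to derive a contradiction directly from the defining implication, making no use of any definability hypothesis; in fact the argument shows that \emph{no} morphism at all exists, Borel or otherwise, which matches the remark that the question is somewhat trivial. Suppose toward a contradiction that maps $\phi\colon\omega^\omega\to IC$ and $\psi\colon IC\to\omega^\omega$ as in the statement are given. The key observation is that a single infinite set $\phi(f)$ is forced to meet at least one of two complementary infinite sets infinitely often, whereas a single function $f$ can be chosen to dominate the $\psi$-images of \emph{both} of those sets at once.

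Concretely, I would first fix a partition of $\omega$ into two infinite pieces, say $A_0$ the even numbers and $A_1$ the odd numbers, and note that $A_0,A_1\in IC$. Next, let $f\in\omega^\omega$ be the pointwise maximum of $\psi(A_0)$ and $\psi(A_1)$, that is, $f(n)=\max\{\psi(A_0)(n),\psi(A_1)(n)\}$ for all $n$. Then $f\geq\psi(A_0)$ and $f\geq\psi(A_1)$ everywhere, so in particular $f\geq^*\psi(A_0)$ and $f\geq^*\psi(A_1)$. Now consider $\phi(f)\in IC$: since $\phi(f)$ is infinite and $A_0\cup A_1=\omega$, at least one of $\phi(f)\cap A_0$ and $\phi(f)\cap A_1$ is infinite. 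Say the former holds, so that $\phi(f)\not\perp A_0$. The defining implication then yields $f\not\geq^*\psi(A_0)$, which contradicts the choice of $f$; the case where $\phi(f)\cap A_1$ is infinite is symmetric. This completes the argument.

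There is essentially no obstacle to overcome here, and the only point requiring care is the joint domination step: because the pointwise maximum of \emph{finitely} many functions still dominates each of them, the two demands $f\geq^*\psi(A_0)$ and $f\geq^*\psi(A_1)$ are simultaneously satisfiable. This is precisely the uniformity that a single morphism would have to provide but that the genuine inequality $\fr b\leq\fr a$ does not, which explains why the cardinal inequality can hold while no Tukey morphism witnesses it.
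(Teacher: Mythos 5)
Your proof is correct and is essentially identical to the paper's own argument: both use the even/odd partition of $\omega$, take $f$ to be the pointwise maximum of $\psi$ applied to the two pieces, and derive a contradiction from the fact that the infinite set $\phi(f)$ must meet one of the two complementary pieces infinitely often.
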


\begin{proof}
  Suppose that the $\phi$ and $\psi$ were such maps.  Consider the
  sets $O=$ the odd numbers and $E=$ the even numbers in place of $A$,
  and let $f(n)=\max\set{\psi(O)(n),\psi(E)(n)}$.  Now either
  $\phi(f)\not\perp O$ or else $\phi(f)\not\perp E$, a contradiction
  in either case.
\end{proof}

We find this triviality-of-a-proof unsatisfying, particularly because
it exploits a pair of complementary sets---something never present in
an infinite a.d.\ family.  This would be resolved by a negative answer
to the following, subtler question.

\begin{question}
  \label{q:ab}
  Can there be a pair of maps satisfying the condition of
  Proposition~\ref{prop:ab} just for sets $A$ ranging in some mad
  family of minimal cardinality?
\end{question}

This discussion admits a generalization to cardinals with definitions
similar to that of $\fr a$.  Specifically, for $\mathcal C$ a
collection of filters on $\omega$ let $\fr p_{\mathcal C}$ bet the
cardinal defined by the triple
$([\omega]^\omega,[\omega]^\omega,\not\subset^*)$ together with the
property $P(\mathcal F)=$ ``$\mathcal F$ generates a filter which is
in $\mathcal C$.''  Then $\fr p$ is $\fr p_{\mathcal C}$ where
$\mathcal C$ consists of all filters.  Moreover, it is easy to see
that $\fr a$ is $\fr p_{\mathcal C}$ where $\mathcal C$ consists of
those filters whose dual ideal is generated by an infinite mad family.

\begin{prop}
  For any class of filters $\mathcal C$, we have $\fr p_{\mathcal
    C}\not\geq_{BT}\fr b$ (whether the inequality $\fr p_{\mathcal
    C}\geq\fr b$ is true or false).
\end{prop}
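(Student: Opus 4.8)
The plan is to observe that the defining property of $\fr p_{\mathcal C}$ is irrelevant to the problem, and then to reuse the argument of Proposition~\ref{prop:ab}. The point is that $\fr b$ is a simple cardinal, so the target property $Q$ in Definition~\ref{def:morphism} is trivial and condition~(1) of that definition is satisfied automatically. Hence a Borel morphism from $\fr p_{\mathcal C}$ to $\fr b$ would amount to nothing more than a pair of maps $\phi\colon\omega^\omega\to[\omega]^\omega$ and $\psi\colon[\omega]^\omega\to\omega^\omega$ satisfying condition~(2):
\[\phi(f)\not\subset^* A\implies f\not\geq^*\psi(A)\]
for every $f\in\omega^\omega$ and \emph{every} $A\in[\omega]^\omega$. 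The feature I would exploit is that the quantifier on $A$ is unrestricted: it ranges over all infinite sets, not merely those generating a filter in $\mathcal C$.

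Next I would take the contrapositive of condition~(2), reading it as ``$f\geq^*\psi(A)$ implies $\phi(f)\subset^* A$'', and then run the complementary-sets trick exactly as in Proposition~\ref{prop:ab}. Letting $O$ and $E$ be the odd and even numbers, I set $f(n)=\max\{\psi(O)(n),\psi(E)(n)\}$, so that $f\geq^*\psi(O)$ and $f\geq^*\psi(E)$. The contrapositive then forces $\phi(f)\subset^* O$ and $\phi(f)\subset^* E$ simultaneously; since $O$ and $E$ partition $\omega$, this makes $\phi(f)$ finite, contradicting $\phi(f)\in[\omega]^\omega$.

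There is no genuine obstacle to overcome: the only thing to notice is that the triviality of $Q$ neutralizes $P$, after which the argument is entirely parallel to the $\fr a$ case, with $\subset^*$ in place of $\perp$. Since neither Borel-ness nor any property of $\mathcal C$ enters, the proof in fact yields the stronger conclusion that no morphism of any kind exists from $([\omega]^\omega,[\omega]^\omega,\not\subset^*)$ to $\fr b$. I would also remark that the proof inherits precisely the defect flagged after Proposition~\ref{prop:ab}, namely its reliance on the complementary pair $O,E$, which can never both belong to a family generating a single filter; the more satisfying problem---whether condition~(2) can be met when $A$ is confined to the generators of a filter in $\mathcal C$ of minimal cardinality, in the spirit of Question~\ref{q:ab}---is left entirely untouched.
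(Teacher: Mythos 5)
Your proof is correct and takes essentially the same approach as the paper, which states outright that its proof is identical to that of Proposition~\ref{prop:ab}: the odd/even complementary-sets trick with $f(n)=\max\{\psi(O)(n),\psi(E)(n)\}$, which you reproduce (merely recast in contrapositive form, concluding $\phi(f)\subset^* O\cap E$ is finite rather than that one of $\phi(f)\not\subset^* O$, $\phi(f)\not\subset^* E$ must hold), after correctly observing that the triviality of the target property $Q$ makes condition~(1) of Definition~\ref{def:morphism} vacuous. Your closing remarks---that no morphism of any kind exists, and that the argument inherits the defect addressed by Question~\ref{q:ab}---also match the paper's surrounding discussion.
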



The proof is identical to that of Proposition~\ref{prop:ab} (and we
can ask a version of Question~\ref{q:ab} in this case).  Concerning
morphisms going the other way, the following observation shows that
the problem is closely connected with that of diagonalizing filters.

\begin{prop}
  \label{prop:diag}
  Suppose that it is possible to diagonalize any filter in $\mathcal
  C$ without adding dominating reals.  Then we have $\fr
  b\not\geq_{BT}\fr p_{\mathcal C}$.
\end{prop}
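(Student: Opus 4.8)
The plan is to argue by contradiction: assume there is a Borel morphism $(\phi,\psi)$ from $\fr b$ to $\fr p_{\mathcal C}$ and then use the hypothesis to manufacture a dominating real. Unwinding the definitions, such a morphism is a pair of Borel maps $\phi\colon[\omega]^\omega\to\omega^\omega$ and $\psi\colon\omega^\omega\to[\omega]^\omega$ satisfying the two clauses of Definition~\ref{def:morphism}. Since the property attached to $\fr b$ is trivial, \emph{every} family $\mathcal F\subseteq\omega^\omega$ satisfies it; feeding $\mathcal F=\omega^\omega$ into the first clause tells us that the image $\{\psi(f):f\in\omega^\omega\}$ generates a filter $\mathcal G$ belonging to $\mathcal C$. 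The second clause, after spelling out the relations ($\A$ being $\not\geq^*$ and $\B$ being $\not\subset^*$), reads $\phi(B)\not\geq^*f\implies B\not\subset^*\psi(f)$, whose contrapositive is the implication I will exploit:
\[B\subset^*\psi(f)\implies\phi(B)\geq^*f\;.\]

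Next I would invoke the hypothesis for this particular $\mathcal G$: pass to a forcing extension $V[G]$ in which $\mathcal G$ is diagonalized, i.e.\ there is an infinite $D\subseteq\omega$ with $D\subset^*H$ for all $H\in\mathcal G$, and in which no real dominates every ground model real. The Borel codes for $\phi$ and $\psi$ are unchanged in $V[G]$, and the displayed implication, being of the form ``for all reals $B,f$, (Borel matrix)'', is $\Pi^1_1$ in those codes and therefore absolute between $V$ and $V[G]$ — this is exactly the absoluteness that makes Borel-morphism-witnessed inequalities survive forcing. Thus for every ground model $f\in\omega^\omega$ we have $\psi(f)\in\mathcal G$, hence $D\subset^*\psi(f)$, and so $\phi(D)\geq^*f$ by the implication applied in $V[G]$.

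This means the single real $\phi(D)\in V[G]$ eventually dominates every ground model real, i.e.\ $\phi(D)$ is a dominating real over $V$, contradicting the choice of the diagonalizing extension; so no such morphism exists and $\fr b\not\geq_{BT}\fr p_{\mathcal C}$. The step I expect to carry the weight is the synthesis in the previous paragraph: because $\mathcal G$ is killed by a \emph{single} pseudo-intersection $D$, the one value $\phi(D)$ is forced above all the ground model reals at once, and it is precisely the no-dominating-real clause of the hypothesis that this violates. Everything else — extracting the filter $\mathcal G\in\mathcal C$ from the first clause and the routine $\Pi^1_1$ absoluteness of the morphism condition — is bookkeeping, though one must take care to keep straight the direction of the morphism and which space carries which relation.
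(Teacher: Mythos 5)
Your proof is correct and is essentially the paper's own argument: unwind the Borel morphism, force to diagonalize the filter generated by $\psi(\omega^\omega)$ without adding dominating reals, use $\Pi^1_1$ absoluteness of the morphism implication, and plug the pseudo-intersection in to contradict the no-dominating-reals property. The only (immaterial) difference is bookkeeping: the paper first fixes a ground-model $f$ not dominated by $\phi(\dot A)$ and derives a contradiction for that single $f$, whereas you conclude that $\phi(D)$ dominates all ground-model reals and then contradict the hypothesis.
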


\begin{proof}
  Suppose that there is such a morphism, that is, there exist Borel
  maps $\phi\colon[\omega]^\omega\to\omega^\omega$ and
  $\psi\colon\omega^\omega\to[\omega]^\omega$ satisfying
  \begin{enumerate}
  \item $\psi(\omega^\omega)$ generates a filter in $\mathcal C$, and
  \item $A\subset^*\psi(f)\implies f\leq^*\phi(A)$.
  \end{enumerate}
  Then by our assumption, it is possible to force to add a
  pseudo-intersection $\dot A$ of $\psi(\omega^\omega)$ without adding
  dominating reals.  Thus there exists $f\in\omega^\omega\cap V$ such
  that $f\not\leq^*\phi(\dot A)$.  Since $\phi$ is Borel, in the
  extension we have that for all $x\in[\omega]^\omega$:
  \[x\subset^*\psi(f)\implies f\leq^*\phi(x)\;.
  \]
  Plugging in $x=\dot A$ yields an immediate contradiction.
\end{proof}

Of course, in the case of $\fr p_{\mathcal C}=\fr a$, we already know
that there is no Borel morphism from $\fr b$ to $\fr a$ (since the
inequality $\fr b\geq\fr a$ can be forced to fail).  It would be
interesting to give a proof of this using a diagonalization argument.
Notice also that the proof of Proposition~\ref{prop:diag} shows
outright that $\phi$ cannot be Borel.  It would be nice to find a
condition which implies $\psi$ cannot be Borel.

\boxhead{$\fr a\rightarrow\fr p$} Since we established that $\fr
a\not\geq_{BT}\fr b$, it is natural to ask whether we even have $\fr
a\geq_{BT}\fr p$.  Indeed, this is the case, since the maps
$\psi(A)=\omega\smallsetminus A$ and $\phi=\id$ satisfy the
requirements:
\begin{enumerate}
\item if $\mathcal F$ is a.\ d. and infinite then $\psi(\mathcal F)$
  is centered, and
\item $\phi(A)\not\perp B\implies A\not\subset^*\psi(B)$.
\end{enumerate}
Thus $\fr a$ hasn't fallen off of the diagram!

\boxhead{$\fr s\rightarrow\fr p$} The following result shows that in
fact $\fr s\not\geq_{BT}\fr p$.

\begin{thm}
  \label{thm:sp}
  Suppose that $\phi,\psi\colon[\omega]^\omega\to[\omega]^\omega$ are
  maps satisfying:
  \begin{enumerate}
  \item $\psi([\omega]^\omega)$ is centered, and
  \item $A\subset^*\psi(B)\implies B\text{ does not split }\phi(A)$.
  \end{enumerate}
  Then $\phi$ and $\psi$ cannot both be Borel.
\end{thm}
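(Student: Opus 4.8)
The plan is to mimic the forcing-and-absoluteness strategy of Proposition~\ref{prop:diag}, but with the \emph{preservation of splitting families} playing the role that the preservation of non-dominating played there. So suppose toward a contradiction that $\phi$ and $\psi$ are both Borel and satisfy (1) and (2). Let $\mathcal F$ be the filter on $\omega$ generated by $\psi([\omega]^\omega)$; by hypothesis~(1) this family is centered, so $\mathcal F$ is a proper filter. Since $\psi$ is Borel, the generating set $\psi([\omega]^\omega)$ is analytic, and hence so is $\mathcal F$; in particular $\mathcal F$ has the Baire property, so it is \emph{not} an ultrafilter (a nonprincipal ultrafilter lacks the Baire property), and by the theorem of Talagrand it is meager.

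The key step will be to force a pseudo-intersection of $\mathcal F$ without adding an unsplit real: that is, to pass to an extension $V[G]$ in which there is an infinite $A\subseteq\omega$ with $A\subseteq^*\psi(B)$ for every $B\in[\omega]^\omega\cap V$, while every infinite subset of $\omega$ in $V[G]$ is still split by some set in $V$. The natural candidate is Mathias--Prikry forcing $\mathsf M_{\mathcal F}$ guided by $\mathcal F$, whose generic real $A$ is by construction a pseudo-intersection of $\mathcal F$. Granting this, the argument concludes as in Proposition~\ref{prop:diag}: condition~(2) is the assertion $\forall A\,\forall B\,[A\subseteq^*\psi(B)\to B\text{ does not split }\phi(A)]$, whose matrix is Borel in the code of $(\phi,\psi)$, so the assertion is $\Pi^1_1$ and therefore absolute to $V[G]$. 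Applying it in $V[G]$ to the generic $A$ and to an arbitrary $B\in[\omega]^\omega\cap V$ (for each of which $A\subseteq^*\psi(B)$ holds) shows that no $B\in V$ splits $\phi(A)$. But $\phi(A)$ is infinite, since ``$\phi$ maps $[\omega]^\omega$ into $[\omega]^\omega$'' is again a $\Pi^1_1$ and hence absolute property; thus $\phi(A)$ is an unsplit real over $V$, contradicting the fact that $G$ added none.

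The main obstacle is precisely this preservation lemma: that a filter generated by an analytic centered family can be diagonalized without adding an unsplit real. Heuristically this should hold because $\mathcal F$ is so far from being an ultrafilter. Indeed, for any fixed $B\in V$ for which both $B$ and $\omega\smallsetminus B$ are $\mathcal F$-positive, no condition of $\mathsf M_{\mathcal F}$ can steer the generic entirely into $B$ or into its complement, so the \emph{generic itself} is split by $B$; the real work is to upgrade this from the single generic real to every real of $V[G]$. Here I would exploit the meagerness of $\mathcal F$ (via the Jalali-Naini--Talagrand interval partition) together with a fusion and pure-decision analysis of $\mathsf M_{\mathcal F}$, reading names for subsets of $\omega$ sufficiently continuously that a ground-model set can be produced which splits them. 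Should a direct preservation argument for $\mathsf M_{\mathcal F}$ prove awkward, an alternative is to replace it by a bounding forcing tailored to the interval partition that still produces a pseudo-intersection of $\mathcal F$ while manifestly keeping $[\omega]^\omega\cap V$ a splitting family.
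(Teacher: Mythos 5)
Your strategy is the same as the paper's: diagonalize the filter $\mathcal F$ generated by $\psi([\omega]^\omega)$ using Mathias--Prikry forcing, argue that the ground-model reals remain a splitting family in the extension, and pull hypothesis (b) into the extension by $\Pi^1_1$ absoluteness to reach a contradiction. Your absoluteness half is complete and correct, including the point that $\phi(A)$ remains infinite. But the half you yourself call ``the main obstacle''---the preservation lemma---is left unproved, and the route you sketch for proving it would fail.

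You propose to deduce that $\mathsf{M}_{\mathcal F}$ adds no unsplit real from the \emph{meagerness} of $\mathcal F$, via the Jalali-Naini--Talagrand partition together with fusion and pure decision. Meagerness is not sufficient for this. Identify $\omega$ with $\omega\times\omega$ by enumerating the finite shells $I_k=\bigl(\{0,\dots,k\}\times\{0,\dots,k\}\bigr)\smallsetminus\bigl(\{0,\dots,k-1\}\times\{0,\dots,k-1\}\bigr)$ consecutively, let $\mathcal U$ be a nonprincipal ultrafilter, and let $\mathcal F$ consist of those $X\subseteq\omega\times\omega$ such that $\{n: X_n\text{ is cofinite}\}\in\mathcal U$, where $X_n$ denotes the $n$th vertical section. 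This filter is meager by Talagrand's criterion: every $X\in\mathcal F$ has a cofinite column and hence meets all but finitely many of the shells $I_k$. Yet the $\mathsf{M}_{\mathcal F}$-generic $A$ projects to the infinite set $\pi(A)=\{n: A_n\neq\emptyset\}$, which is a pseudo-intersection of $\mathcal U$: if $B\in\mathcal U$ then $B\times\omega\in\mathcal F$, so $A\subseteq^* B\times\omega$ and hence $\pi(A)\subseteq^* B$. Thus $\pi(A)$ is unsplit over the ground model, and this Mathias forcing over a meager filter destroys the ground-model splitting family; no amount of fusion analysis can extract the preservation from meagerness alone. What makes the preservation true in your setting is definability, not meagerness: since $\psi$ is Borel, $\mathcal F$ is analytic, so $\mathsf{M}_{\mathcal F}$ is a Suslin ccc forcing, and the ground-model reals remain a splitting family after forcing with any Suslin ccc poset; this follows from \cite[Lemma~3.6.24]{bart}, which is exactly how the paper closes this step. (Your fallback of a \emph{bounding} forcing diagonalizing $\mathcal F$ would indeed suffice, since $\omega^\omega$-bounding forcings preserve splitting families---apply the Borel morphism from $\fr d$ to $\fr s$ plus absoluteness---but such a forcing need not exist: for the Borel filter of those $X\subseteq\omega\times\omega$ all but finitely many of whose columns are cofinite, any pseudo-intersection computes an unbounded real, so this alternative also cannot handle arbitrary analytic $\mathcal F$.)
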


Coskey and Schneider have previously established this result under the
additional assumption that $\phi$ is $E_0$-invariant (\emph{i.e.},
$A=^*A'$ iff $\phi(A)=^*\phi(A')$).  However, that fact is now
superseded by the following shorter and stronger argument, which was
pointed out to us by Dilip Raghavan.

\begin{proof}[Proof of Theorem~\ref{thm:sp}]
  Suppose that $(\phi,\psi)$ are Borel functions satisfying (a) and
  (b).  Letting $\mathcal F$ denote the filter generated by
  $\psi([\omega]^\omega)$, we use the (relativized) Mathias forcing to
  add a pseudo-intersection $\dot A$ for $\mathcal F$.  This forcing
  is always ccc, and since $\mathcal F$ is analytic, the forcing is
  Suslin as well (see \cite[Definition~3.6.1]{bart}).  It follows from
  \cite[Lemma~3.6.24]{bart} that the ground model is a splitting
  family in the forcing extension.  In particular, there exists
  $B\in[\omega]^\omega\cap V$ such that $B$ splits $\phi(\dot A)$.  In
  the ground model, we apply (b) to obtain:
  \[(\forall x\in[\omega]^\omega)\;\;
  x\subset^*\psi(B)\implies B\text{ does not split }\phi(x)
  \]
  Since $\phi$ is Borel, the same sentence holds in the extension.  It
  follows that $B$ does not split $\phi(\dot A)$, which is a
  contradiction.
\end{proof}

We remark that the argument of Theorem~\ref{thm:sp} also shows that
there is no Borel morphism from $\fr s_\sigma$ to $\fr p$ (for the
definition of $\fr s_\sigma$, see the later section on splitting).
We leave open the following question:

\begin{question}
  Does there exist a morphism $(\phi,\psi)$ from $\fr s$ to $\fr p$
  such that just one of the maps is Borel?
\end{question}

\boxhead{$\fr b\rightarrow\fr p$} It is the case that $\fr
b\geq_{BT}\fr p$.  Since the construction is fairly involved, we shall
give the proof its own section, below.

\vspace{6pt} To complete our discussion of van~Douwen's diagram, we
finally verify that whenever an edge does \emph{not} appear in
Figure~\ref{fig:vd}, then there is not a Borel morphism either.  Most
of this verification is routine, because it is already known that any
cardinal inequality not shown in Figure~\ref{fig:vd} can be violated
by forcing.  Hence, if there is no edge between \emph{simple}
invariants $\norm{\bm{A}}$ and $\norm{\bm{B}}$ in Figure~\ref{fig:vd},
then we automatically obtain $\bm{A}\not\geq_{BT}\bm{B}$.

Even when just one of the invariants is involved is simple, a forcing
argument will work.  Indeed, if there is a Borel morphism from
$\bm{A},P$ to $\bm{B}$, then the condition in
Definition~\ref{def:morphism}(a) is trivial, and so it is preserved to
forcing extensions.  On the other hand, if there is a Borel morphism
from $\bm{A}$ to $\bm{B},Q$, and property $Q$ is closed downward, then
the condition in Definition~\ref{def:morphism}(a) amounts to saying
that all of $\im(\psi)$ has property $Q$.  Since all of the cardinals
we are considering are defined by a property $Q$ which is closed
downward and very low in complexity, this will again be preserved to
forcing extensions.

Hence, we need only handle the inequalities between cardinals which
are both not simple.  This is done in the next result.

\begin{prop}
  The invariants $\fr i$, $\fr u$ and $\fr a$ are incomparable with
  respect to $\geq_{BT}$.
\end{prop}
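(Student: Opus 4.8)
The plan is to rule out all six Borel morphisms among the three invariants. For four of these---the morphisms whose target is $\fr u$ or $\fr a$---no new idea is needed: the target property (centered, respectively almost disjoint and infinite) is downward closed and of very low complexity, so by the discussion preceding the proposition, condition (a) of Definition~\ref{def:morphism} reduces to a statement about $\im(\psi)$ and is preserved to forcing extensions. Thus $\fr i\not\geq_{BT}\fr u$, $\fr a\not\geq_{BT}\fr u$, $\fr i\not\geq_{BT}\fr a$ and $\fr u\not\geq_{BT}\fr a$ will each follow by passing to a forcing extension in which the corresponding inequality fails and invoking the absoluteness of condition (b). The genuinely new content is the pair of morphisms whose target is $\fr i$.

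The difficulty with a target of $\fr i$ is that its defining property is not downward closed, so the preservation argument above does not apply verbatim. I would overcome this by exploiting that the property is nonetheless of \emph{finite character}: a family is independent iff every finite Boolean combination of its members is infinite. (Here I use independence as the property defining $\fr i$, which yields the same cardinal as the formulation via finite intersections.) Concretely, suppose $(\phi,\psi)$ is a Borel morphism into $\fr i$ in $V$, and consider the assertion that some finite tuple $a_1,\dots,a_k$ forms part of a family admissible for the source property while some signed intersection $\bigcap_i\psi(a_i)^{\pm}$ is finite. Because a finite centered (respectively almost disjoint and infinite) set is itself an admissible family, this assertion is $\Sigma^1_1$ in $a_1,\dots,a_k$ and the Borel code of $\psi$; since condition (a) in $V$ makes it false, Mostowski absoluteness keeps it false in any extension. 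As independence is determined by finite subfamilies, it follows that $\psi$ still sends source-admissible families to independent families in $V[G]$, so condition (a)---and hence the whole morphism---persists.

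With preservation secured, I would then produce, for each of the six pairs, a forcing extension falsifying the relevant inequality. For the two target-$\fr i$ cases a single model does both: it is known that in the Miller (superperfect tree) model one has $\fr u=\fr a=\aleph_1$ while $\fr d=\aleph_2$, whence $\fr i\ge\fr d=\aleph_2$ yields simultaneously $\fr u<\fr i$ and $\fr a<\fr i$. For the remaining four I would cite the standard independence results giving the consistency of $\fr i<\fr u$, $\fr i<\fr a$, $\fr u<\fr a$ and $\fr a<\fr u$. In each extension a surviving Borel morphism would force the false inequality: applying $\psi$ to a witnessing family of size $\norm{\text{source}}$ produces, by the preserved conditions (a) and (b), a dominating family with the target property of the same size, contradicting the failed inequality.

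The main obstacle I expect is the finite-character reduction for the target $\fr i$. One must verify with care that the failure of condition (a) can be expressed using only finitely many real parameters---so that the residual quantifier over families collapses and the statement is genuinely $\Sigma^1_1$---and this is exactly the point at which the non-downward-closed formulation must be traded for the independence property. The forcing inputs, while resting on deep consistency results, are used only as a black box, so once the preservation lemma is established the incomparability of $\fr i$, $\fr u$ and $\fr a$ follows formally.
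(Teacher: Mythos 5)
Your split into six non-morphisms is the right start, but both halves of your argument have genuine gaps, and they are precisely the points the paper's own proof is engineered to sidestep. The fatal one concerns the two target-$\fr i$ cases. You replace the paper's property for $\fr i$ (``$\mathcal F$ consists of \emph{all} intersections of finitely many members, or complements of members, of an independent family'') by the property ``$\mathcal F$ is independent,'' asserting that this defines the same cardinal. It does not: maximality of an independent family is witnessed by Boolean combinations of its members, never by the members themselves. Concretely, if $I_1,I_2$ are distinct members of an independent family $\mathcal F$, then $B=(I_1\cap I_2)\cup(I_1^{c}\cap I_2^{c})$ splits \emph{every} member $F$ of $\mathcal F$, because each of $F\cap B$ and $F\setminus B$ contains a signed intersection of members of $\mathcal F$ and is therefore infinite. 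So no independent family with at least two members is even dominating for the relation ``does not split,'' and your modified triple defines no cardinal at all. Worse, the modification changes the statement being proved: a family of the paper's form contains both $I_1$ and $I_1^{c}$, hence is never independent (nor even centered), so the two versions of condition (a) are incomparable, and ruling out Borel morphisms into your triple says nothing about morphisms into the triple the proposition is actually about. Finally, the paper's property visibly lacks finite character (it requires $\mathcal F$ to be \emph{exactly} the family of all signed intersections of some independent family), so your $\Sigma^1_1$/Mostowski reformulation cannot simply be rerouted through the correct property.

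The other four cases also have problems. First, you justify preservation of condition (a) by appeal to ``the discussion preceding the proposition,'' but that discussion covers only pairs in which one of the two invariants is simple; the proposition exists exactly because here neither is. With a non-simple source, condition (a) is strictly weaker than ``the image of $\psi$ is centered (resp.\ almost disjoint),'' so the absolute statement you want to transfer need not hold in $V$ at all; these cases are repairable, but only by the finite-character argument you reserve for $\fr i$ (centeredness and almost disjointness are conditions on finite tuples, and ``this finite tuple lies in some admissible source family'' is analytic). Second, and more decisively, your endgame needs the consistency of $\fr i<\fr u$, $\fr i<\fr a$ and $\fr u<\fr a$, which you call standard. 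They are not: the consistency of $\fr i<\fr u$ is a deep theorem of Shelah, and the consistency of $\fr i<\fr a$ appears to be open, so for the pair $(\fr i,\fr a)$ your argument has no model in which to run. The paper's proof avoids every such input by a different reduction: since $\fr i$ and $\fr u$ are built on the same underlying relation ``does not split'' as the \emph{simple} invariant $\fr r$, a Borel morphism between any two of $\fr i,\fr u,\fr a$ induces one in which the occurrence of $\fr i$ or $\fr u$ is traded for $\fr r$, and then only failures of inequalities against $\fr r$ are needed---namely the consistency of $\fr a<\fr r$, $\fr r<\fr i$, $\fr r<\fr u$ and $\fr r<\fr a$, all of which are available classically (Cohen, Miller, Goldstern--Shelah, Laver). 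You should reorganize your proof along those lines rather than attacking the six pairs head on.
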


\begin{proof}
  Referring to the definitions of $\fr i$ and $\fr u$, it is clear
  that if we had either $\fr a\geq_{BT}\fr i$ or $\fr a\geq_{BT}\fr
  u$, then we would have from $\fr a\geq_{BT}\fr r$.  But now the
  inequality $\fr a\geq\fr r$ can be violated by forcing and $\fr r$
  is simple, so we can use the argument above.

  The rest of the cases are similar.  If we had $\fr u\geq_{BT}\fr i$
  then we would also have $\fr r\geq_{BT}\fr i$; if we had $\fr
  u\geq_{BT}\fr a$ then we would also have $\fr r\geq_{BT}\fr a$; if
  we had $\fr i\geq_{BT}\fr u$ then we would have $\fr r\geq_{BT}\fr
  u$; if we had $\fr i\geq_{BT}\fr a$ then we would have $\fr
  r\geq_{BT}\fr a$.  In all four of these cases, the argument above
  applies.
\end{proof}

The results of this section are summarized in Figure~\ref{fig:bvd}.

\begin{figure}[h]
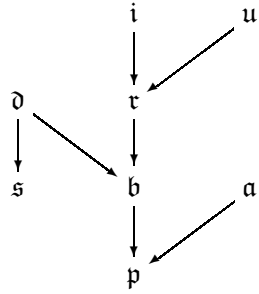

  \caption{Borel Tukey morphisms among some combinatorial cardinal
    invariants.  (Here, $\rightarrow$ means
    $\geq_{BT}$.)\label{fig:bvd}}
\begin{displaymath}
\begin{diagram}[h=1.5em,w=2em]
      &       & \fr i &       & \fr u \\
      &       & \dTo  & \ldTo &       \\
\fr d &       & \fr r &       &       \\
\dTo  & \rdTo & \dTo  &       &       \\
\fr s &       & \fr b &       & \fr a \\
      &       & \dTo  & \ldTo &       \\
      &       & \fr p &       &       \\
\end{diagram}
\end{displaymath}
\end{figure}

\begin{question}
  Is there an interesting alternative set of definitions of these
  invariants for which the Borel morphisms faithfully reflect all of
  the inequalities in van~Douwen's diagram?
\end{question}

For instance, we know that there is no Borel morphism from $\fr i$ to
$\fr d$ as we have defined them.  But it is worth mentioning that if
$\phi$ and $\psi$ are the maps constructed in the
Theorem~\ref{thm:bp}, then property \ref{thm:bp}(b) comes very close
to giving the condition needed for a morphism from $\fr i$ to $\fr d$
(with the roles of $\phi$ and $\psi$ interchanged).  Hence it may be
possible to give a new proof that $\fr i\geq\fr d$ by slightly
modifying the triple for $\fr i$ and the construction in
Theorem~\ref{thm:bp}.

\boxhead{$\fr t$} No discussion involving $\fr p$ would be complete
without mentioning the tower number, $\fr t$.  This cardinal is
defined by the same triple as $\fr p$, together with the property
$P(\mathcal F)=$ ``$\mathcal F$ is linearly ordered''.  Clearly $\fr
t\geq_{BT}\fr p$, but it has only recently been shown by Malliaris and
Shelah \cite{malliaris} that $\fr p\geq\fr t$.  Thus, it is desirable
to verify that the latter inequality doesn't have a Borel proof.

\begin{prop}
  We have $\fr p\not\geq_{BT}\fr t$.
\end{prop}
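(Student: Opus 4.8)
The plan is to suppose that a Borel morphism $(\phi,\psi)$ from $\fr p$ to $\fr t$ exists and to derive a contradiction in $V$, without any forcing. Here $\phi,\psi\colon[\omega]^\omega\to[\omega]^\omega$ are Borel and, by Definition~\ref{def:morphism}, satisfy (a) if $\mathcal F$ is centered then $\psi(\mathcal F)$ is linearly ordered by $\subset^*$, and (b) $\phi(B)\not\subset^* A$ implies $B\not\subset^*\psi(A)$; the contrapositive of (b) is that $B\subset^*\psi(A)$ implies $\phi(B)\subset^* A$. First I would observe that (a) is really a pairwise condition: $\psi(A)$ and $\psi(A')$ are $\subset^*$-comparable whenever $A\cap A'$ is infinite, and hence $\psi$ carries \emph{every} centered family to a $\subset^*$-chain. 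In particular $\psi$ maps a Borel centered family to an \emph{analytic} $\subset^*$-chain.

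Next I would feed in a fixed Borel centered family that admits no pseudo-intersection. The convenient choice is the density filter $\mathcal Z$ of all sets whose complement has asymptotic density zero: it is Borel, it is a free filter and so centered, and it has no pseudo-intersection, because given any infinite $X$ one can choose an infinite density-zero $Y\subset X$ and then $\omega\smallsetminus Y\in\mathcal Z$ while $X\not\subset^*\omega\smallsetminus Y$. By the previous paragraph, $\mathcal C\defeq\psi(\mathcal Z)$ is an analytic $\subset^*$-chain.

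The key input is the descriptive-set-theoretic fact that there are no analytic towers: every analytic subset of $[\omega]^\omega$ that is linearly ordered by $\subset^*$ has a pseudo-intersection. Granting this, fix a pseudo-intersection $A^*$ of $\mathcal C=\psi(\mathcal Z)$, so that $A^*\subset^*\psi(Z)$ for every $Z\in\mathcal Z$. Applying the contrapositive of (b) with $B=A^*$ and $A=Z$ gives $\phi(A^*)\subset^* Z$ for every $Z\in\mathcal Z$, so that the infinite set $\phi(A^*)$ is a pseudo-intersection of $\mathcal Z$. This contradicts the choice of $\mathcal Z$, and the proof is complete.

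The main obstacle is of course establishing that $\mathcal C$ has a pseudo-intersection, i.e.\ the non-existence of analytic towers. The statement reduces to showing that an analytic $\subset^*$-chain has countable coinitiality, since a countable $\subset^*$-decreasing coinitial sequence can be diagonalized; the content is therefore to rule out an analytic copy of an $\omega_1$-tower. I would attempt this through the perfect set property --- an uncountable analytic chain contains a perfect $\subset^*$-chain, which as a Borel linear order on a Polish space ought to be separable from below and hence to have countable coinitiality --- taking care that the coinitiality bound transfers from the perfect subchain to all of $\mathcal C$. I note that the Mathias-forcing approach used in Theorem~\ref{thm:sp}, namely adding a pseudo-intersection of $\langle\psi(\mathcal Z)\rangle$ and transferring (b) by $\Pi^1_1$-absoluteness, runs into exactly the dual difficulty: one must ensure the forcing does not itself add a pseudo-intersection of $\mathcal Z$, which is where the chain hypothesis (a) would have to be exploited to separate this situation from the trivial morphism $\fr p\geq_{BT}\fr p$.
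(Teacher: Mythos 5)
Your reduction is sound as far as it goes, but the entire weight of the argument rests on the assertion that every analytic $\subset^*$-chain has a pseudo-intersection (``there are no analytic towers''), and this you do not prove. That assertion is in fact true, but it is not elementary: the standard route is the Harrington--Marker--Shelah theorem on Borel orderings (every Borel linear quasi-order on a Polish space maps Borel-and-order-preservingly into some $2^\alpha$, $\alpha$ countable, with the lexicographic ordering, and hence admits no strictly monotone $\omega_1$-sequence), applied to the pullback of $\subset^*$ under a continuous surjection of $\omega^\omega$ onto $\mathcal C$; countable coinitiality, and then a pseudo-intersection, follow by your diagonalization. Your own sketch of this step does not work as written: the claim that a perfect $\subset^*$-chain is ``separable from below'' is essentially the Harrington--Marker--Shelah theorem itself, not a consequence of the perfect set property; and the transfer problem you flag is fatal to the sketch, since a perfect subchain of countable coinitiality says nothing about the (possibly long) part of $\mathcal C$ lying $\subset^*$-below it. So as it stands there is a genuine gap, though it could be repaired by invoking the theorem above.

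The larger issue is that all of this machinery is unnecessary, and your own opening observation already contains the elementary proof. You note that condition (a) is a pairwise condition: $\psi(A)$ and $\psi(A')$ are $\subset^*$-comparable whenever $A\cap A'$ is infinite. The real payoff of that remark is that $\psi$ sends every \emph{linked} (pairwise-centered) family to a chain---and finite linked families need not be centered. This is exactly what the paper exploits: take three infinite sets $A,B,C$ with pairwise infinite intersections but $A\cap B\cap C=\emptyset$; applying (a) to the pairs, the family $\{\psi(A),\psi(B),\psi(C)\}$ is a finite $\subset^*$-chain, hence has infinite intersection $D$; then the contrapositive of (b) gives $\phi(D)\subset^*A$, $\phi(D)\subset^*B$, $\phi(D)\subset^*C$, so $A\cap B\cap C$ is infinite, a contradiction. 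This needs no definability hypotheses at all, so it rules out \emph{arbitrary} morphisms from $\fr p$ to $\fr t$, whereas your argument uses Borelness of $\psi$ essentially (to make $\psi(\mathcal Z)$ analytic) and so can at best rule out morphisms with $\psi$ Borel. In effect you replaced the fact that a finite chain has a minimum by the deep fact that an analytic chain has a pseudo-intersection.
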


\begin{proof}
  Suppose towards a contradiction that $(\phi,\psi)$ satisfy
  \begin{enumerate}
  \item if $\mathcal F$ is centered then $\psi(\mathcal F)$ is
    linearly ordered, and
  \item $\phi(x)\not\subset^*y\implies x\not\subset^*\psi(y)$.
  \end{enumerate}
  Let $A,B,C$ be any three infinite sets with empty intersection but
  such that any two have infinite intersection.  Then applying (a) to
  each pair $\set{A,B},\set{A,C},\set{B,C}$ we conclude that
  $\psi(\set{A,B,C})$ is linearly ordered.  Thus $\psi(\set{A,B,C})$
  has infinite intersection, and using (b), it follows that
  $\set{A,B,C}$ does too.  This contradicts the choice of $A,B,C$.
\end{proof}

Once again this result is rather trivial, so it would be interesting
to rework the question to yield a more poignant theorem.  Moreover, we
were unable to include $\fr t$ in Figure~\ref{fig:bvd} since we do not
know its Borel relationship with the other invariants.  Thus we are
left with the following question:

\begin{question}
  Can the result of the next section be improved to show that $\fr
  b\geq_{BT}\fr t$?
\end{question}

\section{A Borel morphism from $\fr b$ to $\fr p$}

Although the simplest proof that $\fr b\geq\fr p$ does not give a
Borel morphism, the following result establishes that it is indeed the
case that that $\fr b\geq_{BT}\fr p$.

\begin{thm}
  \label{thm:bp}
  There exists a continuous map
  $\psi\colon\omega^\omega\to[\omega]^\omega$ and a Borel map
  $\phi\colon[\omega]^\omega\to\omega^\omega$ satisfying:
  \begin{enumerate}
  \item $\psi(\omega^\omega)$ is centered, and
  \item $A\subset^*\psi(f)\implies f\leq^*\phi(A)$.
  \end{enumerate}
\end{thm}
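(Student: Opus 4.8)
The plan is to construct the continuous map $\psi\colon\omega^\omega\to[\omega]^\omega$ so that its image is not merely centered but in fact generates a (countably generated) filter, or better yet so that there is a single "master" pseudo-intersection-like structure controlling the whole image. The key tension is this: condition (a) forces $\psi(\omega^\omega)$ to be centered, while condition (b) must convert the topological statement $A\subset^*\psi(f)$ into a domination statement $f\leq^*\phi(A)$. The natural strategy is to arrange $\psi$ so that the *location* of $\psi(f)$ inside $\omega$ encodes the *growth rate* of $f$, in such a way that any $A$ almost-contained in $\psi(f)$ is forced to be sparse, and that sparseness lets $\phi$ read off a function dominating $f$.

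Concretely, I would fix in advance an interval partition $\omega=\bigsqcup_n I_n$ with $|I_n|$ growing (say $|I_n|=n+1$ or faster), and define $\psi(f)$ by selecting, within each block $I_n$, a subset whose size or position depends on the values $f(0),\dots,f(n)$. A clean approach is to let $\psi(f)\cap I_n$ retain only the first $g_f(n)$ elements of $I_n$, where $g_f$ is some monotone functional of $f\restriction(n{+}1)$; this makes $\psi$ continuous because $\psi(f)\cap I_n$ depends only on finitely many coordinates of $f$. To guarantee (a), I would instead organize things so that $\psi(f)$ always *contains* a fixed infinite set (e.g.\ one point from each block), making the image centered—indeed the whole image could share a common pseudo-intersection. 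The design must simultaneously ensure that being almost-contained in $\psi(f)$ pins down, block by block, how fast one must grow to "catch up" to $f$, so that $\phi(A)$, defined by reading the block-indices where $A$ has elements and inverting the encoding, dominates $f$.

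**The hard part will be** reconciling the two conditions, since they pull in opposite directions: condition (b) wants $\psi(f)$ to be *small* (so that $A\subset^*\psi(f)$ is a strong constraint forcing $A$ to be sparse and hence forcing $f\leq^*\phi(A)$), whereas condition (a) wants the $\psi(f)$ to have large pairwise intersections so the family is centered. The resolution is to make $\psi(f)$ thin *within each block but nonempty in every block*, so that all the sets $\psi(f)$ agree on a guaranteed skeleton (giving centeredness—in fact a common infinite lower bound), while the finer structure inside each block still records $f$. The genuine obstacle is defining $\phi$ so that from $A$ alone—without knowing $f$—one can recover a dominating function: $\phi$ must look at the blocks $I_n$ meeting $A$, decode from $A$'s pattern an upper bound for the $f$ that could have produced a $\psi(f)\supseteq^* A$, and do so in a Borel (here the construction should even make it explicitly Borel, likely Baire-class one) fashion. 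I expect the verification that this decoding genuinely dominates—handling the "almost" in $\subset^*$, which allows finitely many blocks to be violated—to be the most delicate bookkeeping.

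Finally, I would verify continuity of $\psi$ by noting each coordinate of $\psi(f)$ (membership of a fixed integer) depends on a fixed finite initial segment of $f$, verify centeredness by exhibiting the common skeleton inside $\bigcap_f\psi(f)$ (or checking finite intersections directly from the block structure), and verify (b) by a direct computation: given $A\subset^*\psi(f)$, for all large $n$ the element(s) of $A$ in block $I_n$ lie in $\psi(f)\cap I_n$, whose structure caps the admissible value $f(n)$ by a quantity $\phi(A)$ reads off from $A\cap I_n$. I would organize the encoding so that this cap is manifestly at least $f(n)$ for cofinitely many $n$, yielding $f\leq^*\phi(A)$.
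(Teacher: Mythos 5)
There is a genuine gap, and it sits exactly at the point you identified as the main tension. Your proposed resolution---arranging that every $\psi(f)$ contains a fixed infinite ``skeleton,'' so that the image is centered because it has a common infinite lower bound, indeed a common pseudo-intersection---is flatly incompatible with condition (b). If $A_0\in[\omega]^\omega$ satisfies $A_0\subset^*\psi(f)$ for \emph{every} $f\in\omega^\omega$, then applying (b) with $A=A_0$ gives $f\leq^*\phi(A_0)$ for every $f\in\omega^\omega$; that is, the single function $\phi(A_0)$ would $\leq^*$-dominate all of $\omega^\omega$, which is absurd. The same objection defeats your concrete block construction: if $\psi(f)\cap I_n$ is a nonempty initial segment of the block $I_n$ for every $n$, then the set of left endpoints $\{\min I_n : n\in\omega\}$ lies inside every $\psi(f)$ and we are back in the contradictory situation. (If you allow some traces $\psi(f)\cap I_n$ to be empty in order to avoid a skeleton, then your argument for centeredness collapses, and one can check that the difficulty simply reappears one level down, for the sets of block indices where the trace is nonempty.) The real content of the theorem is that $\psi$ must be centered while ensuring that for \emph{every} $A\in[\omega]^\omega$ the set $C_A=\{f : A\subset^*\psi(f)\}$ is $\leq^*$-bounded --- equivalently, the image of any $\leq^*$-unbounded set of functions must have \emph{no} pseudo-intersection. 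A skeleton makes $C_{A_0}$ equal to all of $\omega^\omega$, the worst possible failure.

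For contrast, the paper's construction shows what the missing mechanism looks like. For each $n$ one builds a continuous $\psi_n$ (defined on the tree that is $\omega$-branching for the first $n$ levels and binary thereafter) such that any $n$ of the sets $\psi_n(f_1),\dots,\psi_n(f_n)$ have infinite intersection, while any $n+1$ \emph{distinct} arguments give finite intersection --- empty if already the restrictions $f_i\restriction n$ are distinct (a pigeonhole effect, since elements of $\psi_n(f)$ are $n$-tuples of nodes). These maps are then glued column-by-column into $\psi$. Centeredness holds because any $n$ functions have infinite intersection in column $n$ (and beyond); yet if $S$ is unbounded, one finds infinitely many members of $S$ with pairwise distinct restrictions at some level, which empties all but finitely many columns of $\bigcap_{f\in S}\psi(f)$, and the $(n+1)$-wise finiteness handles each individual column. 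Only after this does the second half of your plan (producing $\phi$ Borel-measurably) make sense, and even there the paper does not ``invert an encoding'': it observes that each $C_{A,n}=\{f : A\smallsetminus n\subset\psi(f)\}$ is compact, that $A\mapsto C_{A,n}$ is Borel into the hyperspace $\mathcal K(\omega^\omega)$, and then takes least upper bounds and diagonalizes. In short, your outline is missing the combinatorial device that makes the image centered without being ``too centered,'' and that device is the heart of the proof.
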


\begin{proof}[Proof]
  The outline of the proof is as follows.  We will construct a
  continuous map $\psi$ which satisfies (a), and has the additional
  property that for every $\leq^*$-unbounded subset
  $S\subset\omega^\omega$, the image $\psi(S)$ does not have a
  pseudo-intersection.  In particular, we will have:
  \begin{itemize}
  \item[($\star$)] for each $A\in[\omega]^\omega$, the set
    $C_A\defeq\set{f\in\omega^\omega\mid A\subset^*\psi(f)}$ is
    $\leq^*$-bounded.
  \end{itemize}
  Letting $\phi(A)$ be such a bound, it is easy to see that
  $(\phi,\psi)$ satisfy property (b).  We will show moreover that
  bounds $\phi(A)$ can be chosen in a Borel fashion.

  We now begin the construction of $\psi$.  Let $n\in\NN$, and let
  $T_n$ denote the tree which is $\omega$-branching for the first $n$
  levels, and binary branching afterward.

  \begin{claim}
    \label{claim:step1}
    There exists a continuous function
    $\psi_n\colon[T_n]\to[\omega]^\omega$ with the properties:
    \begin{enumerate}
    \item[(i)] if $f_1,\ldots,f_n\in[T_n]$ then
      $\psi_n(f_1)\cap\cdots\cap\psi_n(f_n)$ is infinite,
    \item[(ii)] if $f_1,\ldots,f_{n+1}\in[T_n]$ are all distinct, then
      $\psi_n(f_1)\cap\cdots\cap\psi_n(f_{n+1})$ is finite, and
    \item[(iii)] if $f_1,\ldots,f_{n+1}\in[T_n]$ and
      $f_1\seqrestriction n,\ldots,f_{n+1}\seqrestriction n$ are all
      distinct, then we even have that
      $\psi_n(f_1)\cap\cdots\cap\psi_n(f_{n+1})=\emptyset$.
    \end{enumerate}
  \end{claim}

  \begin{claimproof}
    It will be more convenient to construct the map $\psi_n$ from
    $[T_n]$ into the set $[\Omega]^\omega$, where
    \[\Omega=\set{(t_1,\ldots,t_n)\in(T_n)^n\mid (\exists l>n)\;(\forall
      i)\;\lev(t_i)=l\;\&\;\code{(t_1\seqrestriction n,\ldots
        t_n\seqrestriction n)}<l}\;.
    \]
    Here, $\code{\cdot}$ denotes any fixed bijection from
    $(\omega^n)^n\to\omega$.  Now, we simply define
    \[\psi_n(f)=\set{(t_1,\ldots,t_n)\in\Omega\mid (\exists
      i)\;t_i\subset f}\;.
    \]
    To see (i), let $f_1,\ldots,f_n\in[T_n]$ be given.  Then for $l$
    large enough we will have that $(f_1\seqrestriction l,\ldots
    f_n\seqrestriction l)\in\Omega$, and moreover these sequences will
    lie in $\psi_n(f_1)\cap\cdots\cap\psi_n(f_n)$.

    Assertion (iii) just follows from the pigeon-hole principle: no
    sequence of length $n$ will suffice to match $n+1$-many initial
    segments.

    For assertion (ii), if $f_1,\ldots,f_{n+1}$ are all distinct, then
    there exists a level $l$ such that $f_1\seqrestriction
    l,\ldots,f_{n+1}\seqrestriction l$ are all distinct.  Using the
    same pigeon-hole argument as above, any element $(t_1,\ldots,t_n)$
    of $\psi_n(f_1)\cap\cdots\cap\psi_n(f_{n+1})$ must lie at some
    level $l'<l$.  But there are only finitely many such
    $(t_1,\ldots,t_n)$, since we require $\code{(t_1\seqrestriction
      n,\ldots t_n\seqrestriction n)}<l'$ and $T_n$ is finitely
    branching after level $n$.
  \end{claimproof}
  
  To define $\psi$, we simply ``glue together'' all of the $\psi_n$.
  More precisely, for each $n$, we regard $\omega^\omega$ as a subset
  of $[T_n]$ and therefore think of $\psi_n$ as a function from
  $\omega^\omega$ into $[\omega]^\omega$.  Using this tacitly, we let
  $\psi$ be the function from
  $\omega^\omega\to[\omega\times\omega]^\omega$ defined by placing
  $\psi_n$ on the $n\th$ column.  (Even more precisely, for each $n$,
  we fix an embedding from $\omega^{<\omega}$ into $T_n$ which is
  equal to the identity on the first $n$ levels.  Letting
  $\iota_n\colon\omega^\omega\to[T_n]$ denote the induced injection,
  this allows us to replace $\psi_n$ with $\psi_n\circ\iota_n$
  without harming appeals to Claim~\ref{claim:step1}(iii).  We then
  let
  \[\psi(f)=\set{(n,m)\mid m\in\psi_n(\iota_n(f))}\;.
  \]
  Of course, we may also use a pairing function to think of $\psi$ as
  a function into $[\omega]^\omega$.  In our arguments, we will freely
  elide the use of $\iota_n$ and this pairing function.)

  With this definition, it is clear that $\psi(\omega^\omega)$ is
  centered.  Indeed, given a sequence
  $f_1,\ldots,f_n\in\omega^\omega$, we have from
  Claim~\ref{claim:step1}(i) that
  $\psi_n(f_1)\cap\cdots\cap\psi_n(f_n)$ is infinite, and hence so is
  $\psi(f_1)\cap\cdots\cap\psi(f_n)$.  To get property ($\star$), we
  use the following auxiliary claim.

  \begin{claim}
    \label{claim:step2}
    If $S\subset\omega^\omega$ is $\leq$-unbounded then $\bigcap_{f\in
      S}\psi(f)$ is finite.
  \end{claim}

  \begin{claimproof}
    First note that if $S$ is infinite, then for each $n$ we have that
    $\bigcap_{f\in S}\psi_n(f)$ is finite by
    Claim~\ref{claim:step1}(ii).  Hence $\bigcap_{f\in S}\psi(f)$
    meets each column of $\omega\times\omega$ in a finite set.  Now,
    if additionally $S\subset\omega^\omega$ is $\leq$-unbounded, then
    it is not hard to see that there exists a level $l$ and elements
    $f_1,f_2,\ldots\in S$ such that $f_1\seqrestriction
    l,f_2\seqrestriction l,\ldots$ are all distinct.  It follows from
    Claim~\ref{claim:step1}(iii) that for every $n>l$, we have
    $\bigcap_i\psi_n(f_i)=\emptyset$.  Hence $\bigcap_{f\in S}\psi(f)$
    only meets finitely many columns of $\omega\times\omega$.  Putting
    these together, we can conclude that $\bigcap_{f\in S}\psi(f)$ is
    finite.
  \end{claimproof}

  We can now conclude that that $\psi$ satisfies property ($\star$).
  Indeed, for all $A\in[\omega]^\omega$, Claim~\ref{claim:step2}
  implies that for each $n$ the set
  \[C_{A,n}\defeq\set{f\in\omega^\omega\mid A\smallsetminus
    n\subset\psi(f)}
  \]
  is $\leq$-bounded, and hence that $C_A$ is $\leq^*$-bounded.
  However, to show that a bound $\phi(A)$ can be obtained from $A$ in
  a Borel fashion, we need one more claim.  In the following result,
  we will let $\mathcal K(\omega^\omega)$ denote the space of compact
  subsets of $\omega^\omega$ endowed with its usual hyperspace
  topology (called the Vietoris topology).  Note that this space
  includes all of the $C_{A,n}$ because they are closed and
  $\leq$-bounded.

  \begin{claim}
    For all $n$, the function $[\omega]^\omega\to\mathcal
    K(\omega^\omega)$ defined by $A\mapsto C_{A,n}$ is Borel.
  \end{claim}
  
  \begin{claimproof}
%
%
%
    Since the map $A\mapsto A\smallsetminus n$ is continuous, it is
    enough to treat the case $n=0$, that is, to show that the map
    \[A\mapsto\set{f\in\omega^\omega\mid A\subset\psi(f)}
    \]
    is Borel.  By \cite[Theorem~28.8]{kechris}, if $X$ and $Y$ are
    Polish then $\alpha\colon X\to \mathcal K(Y)$ is Borel iff the
    relation $\set{(x,y):y\in \alpha(x)}$ is Borel.  Thus, to
    establish the claim, we need only verify that
    \[\set{(A,f)\mid A\subset\psi(f)}
    \]
    is a Borel subset of $[\omega]^\omega\times\omega^\omega$.  But
    this follows easily from Suslin's theorem, because
    $A\subset\psi(f)$ if and only if there exists
    $B\in[\omega]^\omega$ such that $B=\psi(f)$ and $A\subset B$, and
    also $A\subset\psi(f)$ if and only if for all
    $B\in[\omega]^\omega$ if $B=\psi(f)$ then $A\subset B$.
  \end{claimproof}

  With this in hand, we can define $\phi(A)$ as follows.  For all $n$,
  since $C_{A,n}$ is closed and $\leq$-bounded, we can find its least
  upper bound $b_n$.  (It is an easy exercise to check that the map
  $\mathcal K(\omega^\omega)\to\omega^\omega$ which sends a bounded
  set to its least upper bound is continuous).  Then, simply
  diagonalize to find $\phi(A)$ such that for all $n$,
  $b_n\leq^*\phi(A)$.  This concludes the proof of
  Theorem~\ref{thm:bp}
\end{proof}

We remark that the result cannot be improved to get $\phi$ continuous
too.  Indeed, if $\psi$ is even Borel then $\psi(\omega^\omega)$
generates a Baire measurable filter $\mathcal F$.  A well-known result
of Talagrand \cite{talagrand} and Jalali-Naini \cite{naini} implies
that there exists a partition of $\omega$ into finite intervals
$(J_n)$ such that for all $F\in\mathcal F$ and almost all $n$, $F\cap
J_n\neq\emptyset$.  Now, let $\mathcal A$ be the collection of almost
transversals for $(J_n)$, \emph{i.e.}, sets $A$ such that $\abs{A\cap
  J_n}\leq1$ for all $n$ and $\abs{A\cap J_n}=1$ for all but finitely
many $n$.  Then $\mathcal A$ is $\sigma$-compact, and if $\phi$ is
continuous then $\phi(\mathcal A)$ is $\sigma$-compact as well.  It
follows that $\phi(\mathcal A)$ is $\leq^*$-bounded, say by $f$.  Now,
$\psi(f)\in\mathcal F$ clearly contains some subset $A$ such that
$A\in\mathcal A$.  But then property (b) implies that
$f\leq^*\phi(A)$, and this is a contradiction.

\section{Splitting and unsplitting}

In this section, we consider the so-called $\sigma$-splitting number
$\fr s_\sigma$ and the $\sigma$-unsplitting number $\fr r_\sigma$.
These cardinals are closely related to $\fr s$ and $\fr r$: we have
both $\fr s_\sigma\geq\fr s$ and $\fr r_\sigma\geq\fr r$, and we don't
know whether either of the reverse inequalities are theorems of \ZFC.
We will show in each case that the unknown inequalities do not hold in
the Borel Tukey order.  For $\fr r_\sigma$, this result follows
trivially from Mildenberger's result concerning the cardinals $\fr
r_n$ which was mentioned in the introduction.  For $\fr s_\sigma$, we
will follow a similar strategy and define a family of cardinals $\fr
s_n$ which in some sense approximate $\fr s_\sigma$.

\boxhead{$\fr r_\sigma$} The \emph{$\sigma$-unsplitting} number, $\fr
r_\sigma$, is defined to be the least cardinality of a family of reals
such that no countable subset of $2^\omega$ suffices to split them
all.  In other words, it is defined by the triple
$((2^\omega)^\omega,[\omega]^\omega,\mathsf{R}_\sigma)$ where
$\seq{c_n}\mathrel{\mathsf{R}}_\sigma B$ iff for all $n$, $c_n$ is
almost constant on $B$.  It is clear that $\fr r_\sigma\geq\fr r$, in
fact the trivial maps $\phi(c)=\seq{c}$ (the constant sequence) and
$\psi=\id$ give a morphism.  On the other hand, it is an important
open question whether the reverse inequality $\fr r\geq\fr r_\sigma$
holds.

\begin{cor}[of Mildenberger]
  \label{cor:rrsigma}
  We have $\fr r\not\geq_{BT}\fr r_\sigma$.
\end{cor}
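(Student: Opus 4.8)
The plan is to deduce this corollary from Mildenberger's theorem that $\fr r_m\not\geq_{BT}\fr r_n$ whenever $2\leq m<n$. Since $\fr r=\fr r_2$, it suffices to produce, for some fixed $n>2$, a Borel morphism witnessing $\fr r_\sigma\geq_{BT}\fr r_n$; a hypothetical Borel morphism witnessing $\fr r\geq_{BT}\fr r_\sigma$ could then be composed with it to yield $\fr r_2\geq_{BT}\fr r_n$, contradicting Mildenberger. The only step carrying any content is the construction of a morphism from $\fr r_\sigma$ to $\fr r_n$, which I would carry out for $n=4$.

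Recall that such a morphism is a pair $\phi\colon 4^\omega\to(2^\omega)^\omega$ and $\psi\colon[\omega]^\omega\to[\omega]^\omega$ with $\phi(c)\mathrel{\mathsf{R}}_\sigma B\implies c\mathrel{\mathsf{R}}_4\psi(B)$. I would take $\psi=\id$ and let $\phi$ send a $4$-colouring $c$ to the sequence of its two binary digits, padded out to an $\omega$-sequence by constant colourings; that is, $\phi(c)=\seq{d_0,d_1,\mathbf 0,\mathbf 0,\dots}$, where $d_k(i)$ is the $k\th$ bit of $c(i)\in\set{0,1,2,3}$ and $\mathbf 0$ is the constant colouring. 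This $\phi$ is continuous, since the value of $\phi(c)$ at any coordinate $i$ depends only on $c(i)$.

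To verify the morphism condition, suppose $\phi(c)\mathrel{\mathsf{R}}_\sigma B$, so that every entry of $\phi(c)$—in particular $d_0$ and $d_1$—is almost constant on $B$. Then for all sufficiently large $i\in B$ both binary digits of $c(i)$ are fixed, and hence $c(i)$ itself is fixed; that is, $c$ is almost constant on $B=\psi(B)$, which is exactly $c\mathrel{\mathsf{R}}_4\psi(B)$. The constant colourings used for padding are almost constant on every set, so they impose no extra constraint. This establishes $\fr r_\sigma\geq_{BT}\fr r_4$.

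With this in hand the rest is formal: assuming $\fr r\geq_{BT}\fr r_\sigma$ and composing Borel morphisms (the composition of two Borel morphisms is again a Borel morphism) gives $\fr r_2\geq_{BT}\fr r_4$, which is impossible by Mildenberger's theorem. I expect no serious obstacle here; the only place demanding care is the bit-encoding, and even there the essential point is merely that almost-constancy of all the binary digits of $c$ on an infinite set $B$ forces almost-constancy of $c$ on $B$.
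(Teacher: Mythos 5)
Your proposal is correct and follows essentially the same route as the paper: both reduce to Mildenberger's theorem by constructing a Borel morphism witnessing $\fr r_\sigma\geq_{BT}\fr r_n$ via the bit-encoding $\phi$ (sending a coloring to the sequence of its binary digits) with $\psi=\id$, then composing with a hypothetical morphism from $\fr r=\fr r_2$ to $\fr r_\sigma$. The only cosmetic differences are that you fix $n=4$ while the paper treats general $n>2$, and you make the padding by constant colorings explicit.
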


\begin{proof}
  We have just seen that $\fr r_\sigma\geq_{BT}\fr r$, and we can
  similarly show that $\fr r_\sigma\geq_{BT}\fr r_n$ for $n>2$.
  Indeed, we require maps $\phi\colon n^\omega\to(2^\omega)^\omega$
  and $\psi\colon[\omega]^\omega\to[\omega]^\omega$ such that:
  \[\phi(c)(i)\text{ almost constant on }B\;(\forall i)
  \implies c\text{ almost constant on }\psi(B)\;.
  \]
  For this, we simply take $\phi(c)(i)$ to be the $2$-coloring which
  assigns to $k$ the $i\th$ bit of $c(k)$, and $\psi=\id$ as before.
  But now if we had $\fr r\geq_{BT}\fr r_\sigma$, then since $\fr
  r_\sigma\geq_{BT}\fr r_n$ we would have $\fr r\geq_{BT}\fr r_n$,
  contradicting the result of Mildenberger that there are no Borel
  morphisms from $\mathfrak r_m$ to $\mathfrak r_n$ for $m<n$.
\end{proof}

It should be noted that Spinas \cite{spinas} has strengthened
Mildenberger's result, showing that if $(\phi,\psi)$ is a morphism
from $\fr r_m$ to $\fr r_n$ ($m<n$) then $\phi$ is not Borel.  This
gives us the analogous strengthening in the case of morphisms from
$\fr r$ to $\fr r_\sigma$.

\boxhead{$\fr s_\sigma$} The \emph{$\sigma$-splitting number} $\fr
s_\sigma$ is the least cardinality of a family $\mathcal F$ such that
for any $A_1,A_2,\ldots\in[\omega]^\omega$ there exists $F\in\mathcal
F$ which splits them all.  In other words, it is defined by the triple
$((2^\omega)^\omega,[\omega]^\omega,\mathsf{S}_\sigma)$ where
$\seq{A_n}\mathrel{\mathsf{S}}_\sigma B$ iff for all $n$, $A_n$ is
split by $B$.  It is easy to see that $\fr s\leq\fr s_\sigma\leq\fr
d$, and both of these inequalities are witnessed by Borel morphisms.
(For the first inequality a trivial morphism works, and for the second
inequality the usual proof gives a morphism).  It is not known whether
$\fr s\geq\fr s_\sigma$ is a true inequality, and so it is natural to
ask for verification that there is no Borel proof of it.

Emulating the example of $\fr r$, $\fr r_n$ and $\fr r_\sigma$, we can
similarly define the cardinal $\fr s_n$ to be the least cardinality of
an \emph{$n$-splitting family}, that is, a family $\mathcal F$ such
that given any $A_1,\ldots,A_n$ there exists $B\in\mathcal F$ which
splits them all.  In other words, $\fr s_n$ is defined by the triple
$(([\omega]^\omega)^n,2^\omega,\mathsf{S}_n)$, where
$\seq{A_1,\ldots,A_n}\mathrel{\mathsf{S}}_nB$ iff for all $i$, $A_i$
is split by $B$.  Thus $\fr s$ and $\fr s_1$ are exactly the same by
definition.  It is clear that $\fr s_n=\fr s_m$ and $\fr
s_n\geq_{BT}\fr s_m$ for $m<n$.  We have the following analog of
Mildenberger's result.

\begin{thm}
  \label{thm:sn}
  $\fr s_n\not\geq_{BT}\fr s_{n+1}$ for all $n$.
\end{thm}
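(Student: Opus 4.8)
The plan is to assume, towards a contradiction, that there is a Borel morphism $(\phi,\psi)$ from $\fr s_n$ to $\fr s_{n+1}$, and then to refute it by forcing, in the spirit of the proof of Theorem~\ref{thm:sp}. Unwinding the definitions, such a morphism consists of Borel maps $\phi\colon([\omega]^\omega)^{n+1}\to([\omega]^\omega)^n$ and $\psi\colon2^\omega\to2^\omega$ satisfying
\[
\phi(\seq{A_1,\ldots,A_{n+1}})\mathrel{\mathsf{S}_n}B\implies
\seq{A_1,\ldots,A_{n+1}}\mathrel{\mathsf{S}_{n+1}}\psi(B)\;;
\]
in words, whenever a single $B$ splits all $n$ coordinates of $\phi(\seq{A_1,\ldots,A_{n+1}})$, the set $\psi(B)$ must split each of $A_1,\ldots,A_{n+1}$. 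Since $\phi$, $\psi$ and the relations $\mathsf{S}_n$, $\mathsf{S}_{n+1}$ are Borel, the displayed implication is a coanalytic ($\Pi^1_1$) statement, and hence by Shoenfield's theorem it is absolute between $V$ and any forcing extension $V[G]$. The contradiction will come from violating it in a carefully chosen $V[G]$.

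The heart of the argument is to produce a forcing $P$ such that in $V[G]$ there is a tuple $\seq{A_1,\ldots,A_{n+1}}$ of infinite sets with the two properties
\begin{enumerate}
\item $[\omega]^\omega\cap V$ remains an \emph{$n$-splitting family}, that is, any $n$ infinite sets in $V[G]$ are simultaneously split by some $B\in V$; and
\item no single set $B\in V$ splits all of $A_1,\ldots,A_{n+1}$.
\end{enumerate}
Granting such a $P$, the contradiction is immediate. The image $\phi(\seq{A_1,\ldots,A_{n+1}})$ is an $n$-tuple of infinite sets lying in $V[G]$, so by (a) there is $B\in V$ splitting all $n$ of its coordinates; that is, the hypothesis of the (now absolute) morphism condition holds in $V[G]$. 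The conclusion then forces $\psi(B)$ to split each $A_i$. But $B\in V$ gives $\psi(B)\in V$, directly contradicting~(b).

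The main obstacle is the construction of $P$: one must separate the $n$-splitting property demanded by~(a) from the $(n+1)$-splitting denied by~(b). The tension here is genuine and is exactly what makes the theorem nontrivial. Each individual $A_i$ must be split by some ground model set---otherwise $[\omega]^\omega\cap V$ would fail even to be a splitting family, contradicting~(a) already for $n=1$---and indeed any $n$ of the coordinates must admit a common splitter in $V$; yet the full tuple must defeat \emph{every} ground model set, so that any set splitting all $n+1$ coordinates at once is necessarily a new real. This is the same combinatorial phenomenon that underlies Mildenberger's theorem \cite{mildenberger} for the unsplitting numbers $\fr r_n$, now transported to the splitting side. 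I would build $P$ as a Mathias-style tree forcing adding the tuple by a fusion construction which, at each coordinate, leaves enough freedom for ground model sets to split it, while \emph{entangling} the coordinates so that no single ground model set can split them all simultaneously; the preservation claim in~(a), that $[\omega]^\omega\cap V$ stays $n$-splitting, would then be verified by the Bartoszy\'nski--Judah machinery (see \cite[Lemma~3.6.24]{bart}) already invoked in Theorem~\ref{thm:sp}. Carrying out this fusion, and in particular checking that the entanglement needed for~(b) is compatible with the $n$-splitting preservation required by~(a), is the technical crux of the proof.
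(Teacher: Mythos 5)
Your opening reduction is correct: a Borel morphism $(\phi,\psi)$ from $\fr s_n$ to $\fr s_{n+1}$ does force $\psi$ to carry $n$-splitting families to $(n+1)$-splitting families, the morphism condition is $\Pi^1_1$ and hence absolute, and \emph{if} a forcing with your properties (a) and (b) existed, your argument would yield a contradiction. The genuine gap is that (a) and (b) are jointly inconsistent, so the forcing $P$ --- the entire content of your proof --- provably does not exist, for any forcing notion whatsoever. The reason is the elementary fact behind the equality $\fr s_n=\fr s_m$ of the underlying cardinals (which the paper records as ``clear''): \emph{any} splitting family closed under finite Boolean combinations is automatically $m$-splitting for every finite $m$, and $[\omega]^\omega\cap V$ is such a family in $V[G]$. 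For instance, for $2$-splitting: given infinite $A_1,A_2\in V[G]$, pick $B_1\in V$ splitting $A_1$; if $B_1$ fails to split $A_2$ then, replacing $B_1$ by $\omega\smallsetminus B_1$ if necessary, $A_2\subset^*B_1$; now pick $B_2\in V$ splitting $A_2$, and a short case check shows that one of $B_1\cap B_2$ or $B_1\smallsetminus B_2$ (both lying in $V$) splits both $A_1$ and $A_2$. An induction in the same style handles every $m$. Hence your condition (a) --- indeed already the preservation of plain splitting, which is exactly what the Bartoszy\'nski--Judah lemma you cite provides --- implies that $[\omega]^\omega\cap V$ remains $(n+1)$-splitting in $V[G]$, flatly contradicting (b). The ``tension'' you flag as the technical crux to be resolved by a clever fusion is not a difficulty to be overcome; it is an outright impossibility.

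This also explains why the paper's proof looks nothing like the forcing arguments used for $\fr s\not\geq_{BT}\fr p$ or $\fr i\not\geq_{BT}\fr d$: those run on consistent cardinal separations (the corresponding inequalities can be forced to fail), whereas the cardinals $\fr s_n$ are ZFC-provably equal, so the obstruction to a Borel morphism here is a definability phenomenon rather than a consistency phenomenon, and no absoluteness argument can detect it. Accordingly, the paper proves the stronger statement that no \emph{Baire measurable} $\psi$ alone (no hypothesis on $\phi$ at all) carries $n$-splitting families to $(n+1)$-splitting families, by a category argument: $\psi$ is continuous on a comeager set $G$, and a fusion construction produces a partition $(I_k)$ of $\omega$ into intervals, integers $a_k$, and a predictor $\theta$ such that continuity decides $\psi(c)(a_k)=1$ on the relevant neighborhoods; the families $\mathcal S_r$ of reals predicted on all levels in a fixed congruence class then union to an $n$-splitting family whose $\psi$-image contains no set splitting all of the sets $\set{a_j : j\equiv r}$ simultaneously. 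If you want to repair your approach, you would have to abandon the requirement that \emph{ground model} sets fail to split the generic tuple and instead exploit the definability of $\psi$ directly, which is precisely what the category argument does.
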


\begin{proof}
  We prove the stronger fact that there is no Baire measurable
  function $\psi$ which carries $n$-splitting families to
  $n+1$-splitting families.  For this, we will first focus on the
  proof that there is no Baire measurable function $\psi$ which
  carries ($1$-)splitting families to $2$-splitting families.
  Afterwards, we will show how to modify the argument in the case when
  $n>1$.

  Suppose, towards a contradiction, that $\psi$ carries splitting
  families to $2$-splitting families and that $\psi$ is continuous on
  a comeager set $G$.  Let $O_n$ be a decreasing family of dense open
  sets such that $\bigcap O_n\subset G$.  We will construct a
  partition $(I_k)$ of $\omega$ into finite intervals, a sequence of
  distinct integers $a_k$, and a family of sequences
  $\set{\theta(s)\in 2^{I_k}: s\in2^{I_{<k}}}$ (where $I_{<k}$ denotes
  $\bigcup_{j<k}I_j$).  In our construction, we shall ensure that for
  each $s\in2^{I_{<k}}$ the following are satisfied:
  \begin{enumerate}
  \item $N_{s\cup\theta(s)}\subset O_k$, and
  \item for all $c\in N_{s\cup\theta(s)}\cap G$ we have $\psi(c)(a_k)=1$.
  \end{enumerate}
  Here as usual the notation $N_s$ means the basic open neighborhood
  of $2^\omega$ corresponding to the sequence $s$.  Borrowing some
  terminology from \cite{blass-evasion}, let us say that $\theta$
  \emph{predicts} $c\in2^\omega$ at level $k$ iff
  $\theta(c\seqrestriction{I_{<k}})=c\seqrestriction{I_k}$.  Thus,
  condition (a) implies that if $\theta$ predicts $c$ at infinitely
  many levels, then $c$ lies in $G$.
  
  Admitting the construction, we consider the families
  \begin{align*}
    \mathcal S_{even}&=\set{c\in2^\omega:\theta\text{ predicts $c$ at
        all even levels}}\\
    \mathcal S_{odd} &=\set{c\in2^\omega:\theta\text{ predicts $c$ at
        all odd levels}}\;.
  \end{align*}
  Then $\mathcal S_{even}$ can split any subset of
  $\bigcup_jI_{2j+1}$, and $\mathcal S_{odd}$ can split any subset of
  $\bigcup_jI_{2j}$.  It follows easily that $\mathcal
  S_{even}\cup\mathcal S_{odd}$ is a splitting family.  On the other
  hand, using (b), we have that $\psi(\mathcal S_{even})$ cannot split
  the set $\set{a_{2j}:j\in\omega}$, and $\psi(\mathcal S_{odd})$
  cannot split the set $\set{a_{2j+1}:j\in\omega}$.  We therefore
  conclude that while $\mathcal S_{even}\cup\mathcal S_{odd}$ is a
  splitting family, $\psi(\mathcal S_{even}\cup\mathcal S_{odd})$ is
  not a $2$-splitting family, a contradiction.

  We now turn to the construction.  Suppose that $I_j$, $a_j$ and
  $\theta(s)$ have been defined for $j<k$ and $s\in 2^{I_{<j}}$.  For
  each $s\in2^{I_{<k}}$, we first use the fact that $O_k$ is dense
  open to find a $t(s)$ such that $N_{s\cup t(s)}\subset O_k$.  This
  will imply that after the construction, (a) will be satisfied.
  Next, roughly speaking, we will use the continuity of $\psi$ on $G$
  to find $\theta(s)\supset t(s)$ which decides certain values of
  $\psi(c)(m)$ for $c\in N_{s\cup\theta(s)}\cap G$.  To satisfy (b),
  we just need to ensure that we can find some $m$ where this value is
  always decided to be $1$.

  \begin{claim}
    For each $s\in2^{I_{<k}}$, there are only finitely many $m$ such
    that for all $c\in N_{s\cup t(s)}\cap G$ we have $\psi(c)(m)=0$.
  \end{claim}
  
  \begin{claimproof}
    Otherwise there would be an infinite subset $Z\subset\omega$ such
    that for all $c\in N_{s\cup t(s)}\cap G$ we have
    $\psi(c)\restriction Z=0$.  But this implies that $\psi(N_{s\cup
      t(s)}\cap G)$ is not a splitting family, which is a
    contradiction because $N_{s\cup t(s)}\cap G$ is nonmeager and
    hence splitting, and $\psi$ takes splitting families to splitting
    families.
  \end{claimproof}
  
  Now, we can choose $a_k>a_{k-1}$ so large that for all $s\in
  2^{I_{<k}}$ there exists $c_s\in N_{s\cup t(s)}\cap G$ such that
  $\psi(c_s)(a_k)=1$.  Using the continuity of $\psi$, we can choose
  $\theta(s)\subset c_s$ extending $t(s)$ so that for all $c\in
  N_{s\cup\theta(s)}\cap G$ we have $\psi(c)(a_k)=1$.  Lengthening
  $\theta(s)$ if necessary, we can suppose that they all have the same
  domain, which we take for $I_k$.  This completes the construction,
  and the proof in the case when $n=1$.

  Finally, we briefly show how to change the argument when $n>1$.
  Rather than defining $\mathcal S_{even}$ and $\mathcal S_{odd}$, we
  simply define $\mathcal S_r$ to be the set of $c$ such that $\theta$
  predicts $c$ at all levels which are congruent to $r$ modulo $n$.
  Then it is not hard to verify that $\mathcal
  S_0\cup\cdots\cup\mathcal S_{n-1}$ is an $n-1$-splitting family.
  But no element of $\psi(\mathcal S_0\cup\cdots\cup\mathcal S_{n-1})$
  can simultaneously split all of the sets
  $A_r=\set{a_{nj+r}:j\in\omega}$ for $r<n$.
\end{proof}

\begin{cor}
  \label{cor:ssigma}
  We have $\fr s\not\geq_{BT}\fr s_\sigma$.
\end{cor}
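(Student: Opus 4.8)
The plan is to mirror the deduction of Corollary~\ref{cor:rrsigma} from Mildenberger's theorem, now using Theorem~\ref{thm:sn} in place of Mildenberger's result. The key point is that $\fr s_\sigma$ Borel--Tukey dominates each of the finite approximations $\fr s_n$, so a hypothetical Borel morphism witnessing $\fr s\geq_{BT}\fr s_\sigma$ could be composed to produce one witnessing $\fr s\geq_{BT}\fr s_2$, which Theorem~\ref{thm:sn} forbids.

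First I would establish $\fr s_\sigma\geq_{BT}\fr s_2$ by an explicit continuous morphism $(\phi,\psi)$. For the map on the left coordinate, define $\phi\colon([\omega]^\omega)^2\to([\omega]^\omega)^\omega$ by $\phi(\seq{A_1,A_2})=\seq{A_1,A_2,A_2,A_2,\ldots}$, and for the map on the right coordinate let $\psi\colon[\omega]^\omega\to2^\omega$ be the natural inclusion (a set identified with its characteristic function). To check the morphism condition, suppose $B$ splits every entry of $\phi(\seq{A_1,A_2})$; then in particular $B$ splits both $A_1$ and $A_2$, so $\psi(B)=B$ splits each of them, that is, $\seq{A_1,A_2}\mathrel{\mathsf{S}_2}\psi(B)$. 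Both maps are plainly continuous. (More generally the same construction gives $\fr s_\sigma\geq_{BT}\fr s_n$ for every $n$, just as $\fr r_\sigma\geq_{BT}\fr r_n$ in the proof of Corollary~\ref{cor:rrsigma}, but only the case $n=2$ is needed here.)

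Next I would suppose toward a contradiction that $\fr s\geq_{BT}\fr s_\sigma$, witnessed by a Borel morphism $(\phi',\psi')$ with $\psi'\colon2^\omega\to[\omega]^\omega$. Composing this with the morphism $(\phi,\psi)$ above yields a Borel morphism from $\fr s$ to $\fr s_2$, whose right coordinate is the Borel map $\psi\circ\psi'\colon2^\omega\to2^\omega$. Since morphisms send dominating families to dominating families, $\psi\circ\psi'$ carries splitting families (the dominating families for $\fr s=\fr s_1$) to $2$-splitting families (the dominating families for $\fr s_2$). This directly contradicts the conclusion of Theorem~\ref{thm:sn} in the case $n=1$, which asserts that no Baire measurable---and in particular no Borel---function carries splitting families to $2$-splitting families.

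I do not anticipate a genuine obstacle here: the only content beyond the bookkeeping of the composition is verifying that the simple morphism $\fr s_\sigma\geq_{BT}\fr s_2$ is correct, and that the right coordinate of the composite is exactly a map carrying splitting families to $2$-splitting families, so that Theorem~\ref{thm:sn}---proved in the stronger $\psi$-only form---applies. The one thing to keep straight is the direction in which morphisms compose, namely that the right coordinates compose in the order $\psi\circ\psi'$, together with the identification of dominating families for $\fr s_n$ with $n$-splitting families.
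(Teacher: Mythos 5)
Your proposal is correct and follows exactly the paper's own argument: the paper also deduces the corollary by composing a hypothetical Borel morphism from $\fr s$ to $\fr s_\sigma$ with the trivial morphism from $\fr s_\sigma$ to $\fr s_2$, contradicting Theorem~\ref{thm:sn}. You merely spell out the trivial morphism and the bookkeeping of the composition explicitly, which the paper leaves implicit.
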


\begin{proof}
  This is just the same simple argument of
  Corollary~\ref{cor:rrsigma}.  Suppose there were a Borel morphism
  from $\fr s$ to $\fr s_\sigma$.  Then composing it with a (trivial)
  morphism from $\fr s_\sigma$ to $\fr s_2$ we would obtain one from
  $\fr s$ to $\fr s_2$, contradicting Theorem~\ref{thm:sn}.
\end{proof}

The argument of Theorem~\ref{thm:sn} can also be used to separate
(arbitrary) finite splitting from infinite splitting.  That is, if we
define the cardinal $\fr s_{<\omega}$ by the relation
$\mathsf{S}_{<\omega}=\bigcup_{n\in\omega}\mathsf{S}_n$, then we have
the following corollary to the proof of Theorem~\ref{thm:sn}.

\begin{cor}
  We have $\fr s_{<\omega}\not\geq_{BT}\fr s_\sigma$.
\end{cor}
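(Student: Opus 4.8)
The plan is to follow the strategy of Theorem~\ref{thm:sn}, running the construction a single time and then partitioning the levels into infinitely many infinite pieces rather than into residue classes modulo a fixed integer. As in that theorem, it is enough to rule out a Baire measurable $\psi$: a Borel morphism from $\fr s_{<\omega}$ to $\fr s_\sigma$ would in particular supply a Borel (hence Baire measurable) map $\psi\colon 2^\omega\to[\omega]^\omega$ carrying every dominating family for $\fr s_{<\omega}$---that is, every family which is $n$-splitting for all $n$ simultaneously---to a dominating family for $\fr s_\sigma$, namely a $\sigma$-splitting family. So I would assume for contradiction that such a $\psi$ exists and is continuous on a comeager set $G$, and then carry out verbatim the construction of Theorem~\ref{thm:sn} to produce a partition $(I_k)$ of $\omega$ into finite intervals, distinct integers $a_k$, and a predictor $\theta$ satisfying conditions (a) and (b) at every level $k$.

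Next I would fix a partition $\omega=\bigsqcup_{r\in\omega}R_r$ of the set of levels into infinitely many infinite pieces and define $\mathcal S_r$ to be the set of those $c\in2^\omega$ that $\theta$ predicts at every level in $R_r$. Since $R_r$ is infinite, condition (a) places $\mathcal S_r$ inside $G$, and then condition (b) forces $a_k\in\psi(c)$ for every $c\in\mathcal S_r$ and every $k\in R_r$. Put $\mathcal S=\bigcup_r\mathcal S_r$. The two things to check are that $\mathcal S$ is finite-splitting while $\psi(\mathcal S)$ is not $\sigma$-splitting. The second is immediate: the sets $A_r=\set{a_k:k\in R_r}$ are infinite and pairwise disjoint, and for $c\in\mathcal S_r$ we have $A_r\subset\psi(c)$, so $\psi(c)$ fails to split $A_r$; hence no single $\psi(c)$ with $c\in\mathcal S$ can split every member of $\set{A_r:r\in\omega}$.

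The heart of the matter---and the one genuinely new point beyond Theorem~\ref{thm:sn}---is to verify that $\mathcal S$ is $n$-splitting for every $n$ at once. Given $A_1,\ldots,A_n\in[\omega]^\omega$, I would observe that $A_i\subset^*\bigcup_{k\in R_r}I_k$ holds for at most one index $r$, since the regions $\bigcup_{k\in R_r}I_k$ are pairwise disjoint. Thus at most $n$ indices $r$ are ``forbidden,'' and because there are infinitely many pieces I may choose an unforbidden $r$; for this $r$ every $A_i$ meets the free region $\bigcup_{k\notin R_r}I_k$ infinitely often. As $\mathcal S_r$ is completely free on that region, a single $c\in\mathcal S_r$ can be built to split all $n$ of the sets $A_i$ simultaneously---a routine interleaving, since one coloring always splits finitely many infinite sets at once, while on the levels of $R_r$ one simply follows $\theta$. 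This shows $\mathcal S$ is finite-splitting, and together with the previous paragraph produces the desired contradiction. I expect this pigeonhole step to be the main obstacle: it is exactly what forces infinitely many pieces and would collapse for any fixed finite partition, mirroring the fact that membership in $\fr s_{<\omega}$ demands $n$-splitting for all $n$.
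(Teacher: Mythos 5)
Your proposal is correct and is essentially the paper's own proof: both run the construction of Theorem~\ref{thm:sn} once and then take a union of prediction families indexed by infinitely many classes of levels, so that the union is $n$-splitting for every $n$ while its $\psi$-image fails to split a fixed countable family of witness sets of the form $\{a_k\}$. The only difference is bookkeeping: the paper indexes the families by residue classes modulo every $n$ (the sets $\mathcal S_{n,r}$ for all $n$ and $r<n$), while you partition the levels into infinitely many disjoint infinite pieces $R_r$, so that a single pigeonhole argument replaces the modulus-by-modulus verification.
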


\begin{proof}
  Suppose towards a contradiction that $\psi$ is a Borel map which
  carries finitely splitting families to infinitely splitting
  families, and construct $I_k$, $\theta$, and $a_k$ as before.  Now,
  we simply put together all of the partial splitting families used in
  the proof of Theorem~\ref{thm:sn}.  Namely, let $\mathcal S_{n,r}$
  denote the set of all $c$ such that $\theta$ predicts $c$ at all
  levels which are congruent to $r$ modulo $n$.  Then $\bigcup\mathcal
  S_{n,r}$ is clearly $n$-splitting for all $n$, but no element of
  $\psi(\bigcup\mathcal S_{n,r})$ can ever simultaneously split all of
  the sets $A_r=\set{a_{nj+r}:j\in\omega}$ for $n\in\omega$ and $r<n$.
\end{proof}

\section{A sea of splitting numbers}

In this last section we describe a family \vojtas\ triples of size
continuum, each of which describes the usual splitting number as a
cardinal invariant, but which are Borel Tukey inequivalent.  Similar
results have appeared before; it is known that there is a continuum of
triples which are incomparable even up to ordinary Tukey equivalence.
Our result gives a method of producing essentially arbitrary patterns
in the $\leq_{BT}$ ordering.

To begin, we will need to extend the methods of the previous section
to produce antichains as well as chains.  Building on our earlier
notation, for $m\leq n$ we let $\fr s_{n,m}$ denote the least
cardinality of an \emph{$n,m$-splitting family}: that is, an $\mathcal
F\subset2^\omega$ such that for any sequence $A_1,\ldots, A_n$ of
infinite subsets of $\omega$ there exists $B\in\mathcal F$ which
splits at least $m$ of them.  Thus $\fr s_{n,m}$ is defined by a
triple $(([\omega]^\omega)^n,2^\omega,\mathsf{S}_{n,m})$ where
$\mathsf{S}_{n,m}$ denotes the relation ``at least $m$ of which are
split by''.  Again, all of the cardinals $\fr s_{n,m}$ are equal to
$\fr s$, but it is not immediately clear which pairs are related by a
Borel Tukey map.  The following result computes precisely when this is
the case.

\begin{prop}
  \label{prop:compute}
  Let $m\leq n$ and $m'\leq n'$.
  \begin{itemize}
  \item If $m<m'$ then there is not a Borel Tukey morphism from
    $\fr s_{n,m}$ to $\fr s_{n',m'}$.
  \item If $m\geq m'$ then there is a Borel Tukey morphism from $\fr
    s_{n,m}$ to $\fr s_{n',m'}$ if and only if the following holds:
    \begin{equation}
      \label{eq:buckets}
      \left\lfloor\frac{n}{n'}\right\rfloor(m'-1)+\min\left(r,m'-1\right)<m\;,
    \end{equation}
    where $r$ denotes the remainder upon dividing $n$ by $n'$.
  \end{itemize}
\end{prop}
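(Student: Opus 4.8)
The plan is to prove the sharper statement that a Borel Tukey morphism from $\fr s_{n,m}$ to $\fr s_{n',m'}$ exists if and only if \eqref{eq:buckets} holds; the two bullets follow once we note that $m<m'$ forces \eqref{eq:buckets} to fail (when $n\geq n'$ the left side is at least $m'-1\geq m$, and when $n<n'$ it equals $\min(n,m'-1)\geq m$). The reduction underlying both directions is the one implicit in Theorem~\ref{thm:sn}: if $(\phi,\psi)$ is any morphism as in Definition~\ref{def:morphism}, then $\psi$ must carry every $(n,m)$-splitting family $\mathcal F$ to an $(n',m')$-splitting family, since for any $n'$-tuple $\vec B$ the $(n,m)$-splitting of $\mathcal F$ provides $c\in\mathcal F$ splitting at least $m$ coordinates of $\phi(\vec B)$, whereupon the morphism condition forces $\psi(c)$ to split at least $m'$ of $\vec B$. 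Hence it suffices to build an honest morphism when \eqref{eq:buckets} holds and to show that no Baire measurable $\psi$ transports the splitting families when it fails.

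For the positive direction I would take $\psi=\id$ and let $\phi\colon([\omega]^\omega)^{n'}\to([\omega]^\omega)^n$ be the ``balanced bucketing'' map that copies the $n'$ inputs into the $n$ output coordinates as evenly as possible, so each input is copied $\lfloor n/n'\rfloor$ or $\lceil n/n'\rceil$ times, with exactly $r$ of them copied the larger number of times. This $\phi$ is continuous. A coloring $c$ splits an output coordinate exactly when it splits the input copied there, so if $c$ splits fewer than $m'$ of the inputs then the number of output coordinates it splits is at most the sum of the $m'-1$ largest bucket sizes, which is precisely $\lfloor n/n'\rfloor(m'-1)+\min(r,m'-1)$. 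When \eqref{eq:buckets} holds this is $<m$, so by contraposition $c$ splitting at least $m$ coordinates of $\phi(\vec B)$ forces $c=\psi(c)$ to split at least $m'$ of the $B_i$, which is the morphism condition.

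For the negative direction I would reuse the prediction machinery of Theorem~\ref{thm:sn}. Two category facts make it applicable: every nonmeager $\mathcal H\subset2^\omega$ is $(n,m)$-splitting (for a fixed $n$-tuple the colorings splitting at least $m$ of it are comeager, being the complement of finitely many meager ``almost constant'' sets), and every $(n',m')$-splitting family is splitting (apply it to $n'$ copies of one set). Thus a Baire measurable $\psi$ carrying $(n,m)$-splitting families to $(n',m')$-splitting ones sends nonmeager sets to splitting families, which is all the construction of the intervals $I_k$, the points $a_k$, and the predictor $\theta$ needs; in particular we again obtain $\psi(c)(a_k)=1$ whenever $c\in G$ and $\theta$ predicts $c$ at level $k$. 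I would set $A_i=\set{a_k:k\equiv i\pmod{n'}}$ for $i<n'$ and, for each $R\subset\set{0,\ldots,n'-1}$ with $\abs{R}=n'-m'+1$, let $\mathcal S_R$ be the colorings predicted by $\theta$ at every level $k$ with $k\bmod n'\in R$. For $c\in\mathcal S_R$ the function $\psi(c)$ is constantly $1$ on each $A_i$ with $i\in R$, hence splits at most $n'-\abs{R}=m'-1$ of the $A_i$; so with $\mathcal S=\bigcup_R\mathcal S_R$ the image $\psi(\mathcal S)$ fails to be $(n',m')$-splitting, witnessed by $A_0,\ldots,A_{n'-1}$.

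The crux, and the step I expect to be the main obstacle, is verifying that when \eqref{eq:buckets} fails the family $\mathcal S$ is genuinely $(n,m)$-splitting. Given $D_1,\ldots,D_n$, a single $c\in\mathcal S_R$ can be built by diagonalizing on the levels $k$ with $k\bmod n'\notin R$ (which $\mathcal S_R$ leaves free) so as to split every $D_j$ having infinitely many points in those free levels, i.e.\ every $D_j$ whose set of ``heavy'' residues meets the complement of $R$. Choosing $R$ optimally, the number of $D_j$ we can split is the maximum, over $(m'-1)$-element residue sets, of how many $D_j$ are heavy on a chosen residue. The worst case is when each $D_j$ is heavy on a single residue and these are spread as evenly as possible over the $n'$ residues; a short pigeonhole computation then shows the optimal choice splits exactly $\lfloor n/n'\rfloor(m'-1)+\min(r,m'-1)$ of the $D_j$, dual to the bucketing bound above. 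Since \eqref{eq:buckets} failing means this quantity is $\geq m$, we can always split at least $m$ of the $D_j$. The real work is the extremal combinatorial lemma that an even spread minimizes the sum of the $m'-1$ largest bin sizes, together with the routine observation that coarser heavy-sets only make splitting easier.
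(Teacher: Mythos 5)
Your proposal is correct and takes essentially the same route as the paper: the positive direction is the same bucket/pigeonhole computation (the paper partitions each $B_i$ into $\lfloor n/n'\rfloor$ or $\lceil n/n'\rceil$ infinite pieces where you duplicate coordinates---an immaterial difference), and the negative direction reuses the prediction machinery of Theorem~\ref{thm:sn} with congruence-class families and the same extremal ``even spread minimizes the top $m'-1$ buckets'' lemma, just as the paper does. The only organizational difference is that you prove the single statement ``morphism iff \eqref{eq:buckets}'' with one construction (residues mod $n'$, families free on exactly $m'-1$ classes) and derive the first bullet from it, whereas the paper treats $m<m'$ separately with a slightly different family; your unification is sound since, as you verify, $m<m'$ forces \eqref{eq:buckets} to fail.
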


The combinatorial condition in Equation~\eqref{eq:buckets} means: if
you spread $n$ balls evenly over $n'$ ordered buckets (with the
remainder spread over the left-most buckets), then among the first
$m'-1$ buckets there are fewer than $m$ balls.  For a diagram
depicting this scenario see Figure~\ref{fig:buckets}.

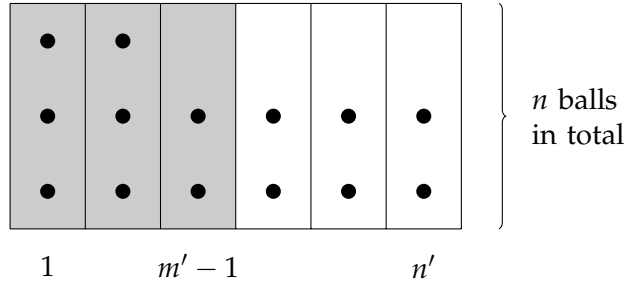
\begin{figure}[h]
  \begin{tikzpicture}
    \draw[draw=none,fill=black!20] (0,0) rectangle (3,3);
    \draw[ystep=3] (0,0) grid (6,3);
    \node at (.5,-.5) {$1$};
    \node at (2.5,-.5) {$m'-1$};
    \node at (5.5,-.5) {$n'$};
    \foreach \x in {.5,1.5,...,5.5}
      \foreach \y in {.5,1.5}
        \fill (\x,\y) circle (.1);
    \foreach \x in {.5,1.5}
      \fill (\x,2.5) circle (.1);
    \draw[decorate,decoration=brace] (6.5,3) -- (6.5,0);
    \node[anchor=west,text width=1.5cm] at (6.8,1.5) {$n$ balls in total};
  \end{tikzpicture}
  \caption{Deciding when Equation~\eqref{eq:buckets} holds.  In this
    particular example, $n=14$, $n'=6$, and $m'-1=3$.  The number of
    balls lying in the shaded region corresponds to the left-hand side
    of Equation~\eqref{eq:buckets}.\label{fig:buckets}}
\end{figure}

\begin{proof}
  We begin with the first claim.  As in the proof of
  Theorem~\ref{thm:sn}, we assume towards a contradiction that there
  is a Borel map $\psi$ carrying $(n,m)$-splitting families to
  $(n',m')$-splitting families.  We then carry out the construction of
  $\theta$, $I_k$, and $a_k$ satisfying (a) and (b) from the proof of
  Theorem~\ref{thm:sn}.  This done, we again let $\mathcal S_{n',r}$
  denote the family consisting of those $c\in2^\omega$ such that
  $\theta$ predicts $c$ at all levels which are congruent to $r$
  modulo $n'$.  We then let
  \[\mathcal S=\bigcup_{r_1<\ldots<r_{n'-m}}
  \left(\bigcap_i\mathcal S_{n',r_i}\right)\;,
  \]
  that is, the set of all $c$ such that $\theta$ predicts $c$ on at
  least $n'-m$ many congruence classes of levels.  Then it is not hard
  to verify that $\mathcal S$ is an $(n,m)$-splitting family---in
  fact, it is an $m$-splitting family.  But by the construction, no
  element of $\psi(\mathcal S)$ can split $m+1$ many of the sets
  $A_r=\set{a_{n'j+r}:j\in\omega}$ for $r<n'$.

  For the second claim, first suppose that Equation~\eqref{eq:buckets}
  holds.  We shall argue that every $(n,m)$-splitting family is in
  fact $(n',m')$-splitting, and hence the identity morphism will
  suffice.  Indeed, suppose that a family $\mathcal S$ is
  $(n,m)$-splitting and let $B_1,\ldots,B_{n'}$ be infinite subsets of
  $\omega$.  Partition each $B_i$ into either $\lfloor n/n'\rfloor$ or
  $\lfloor n/n'\rfloor+1$ many infinite subsets $C_i^j$ in such a way
  that there are $n$ many $C_i^j$ in total.  Since $\mathcal S$ is
  $(n,m)$-splitting, there exists $c\in\mathcal S$ which splits at
  least $m$ of these subsets.  It now follows from
  Equation~\eqref{eq:buckets} that $c$ splits at least $m'$ of the
  original $n'$ sets.  (To visualize this, refer to
  Figure~\ref{fig:regions}.)  Thus $\mathcal S$ is
  $(n',m')$-splitting.

  \begin{figure}[h]
    \begin{tikzpicture}
      \draw[draw=none,fill=black!20] (0,0) rectangle (3,3);
      \draw[ystep=3] (0,0) grid (6,3);
      \node at (.5,-.5) {$B_1$};
      \node at (2.5,-.5) {$B_{m'-1}$};
      \node at (5.5,-.5) {$B_{n'}$};
      \draw (0,0) grid (2,3);
      \draw[ystep=1.5] (2,0) grid (6,3);
      \draw[decorate,decoration=brace] (6.5,3) -- (6.5,0);
      \node[anchor=west,text width=1.5cm] at (6.8,1.5)
        {$n$ of the $C_i^j$ in total};
    \end{tikzpicture}
    \caption{If the shaded region contains fewer than $m$ many
      regions, then any set which splits at least $m$ regions must
      also split at least $m'$ many of the columns
      $B_i$.\label{fig:regions}}
  \end{figure}
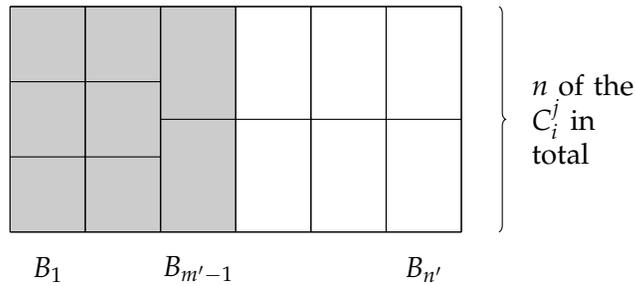

  Now suppose that Equation~\eqref{eq:buckets} fails.  Note that in
  this case we have that $n'<n$.  Once more we take the contradiction
  approach.  Suppose there is a Borel map $\psi$ which carries
  $(n,m)$-splitting families to $(n',m')$-splitting families, and
  construct $\theta$, $I_k$, and $a_k$.  We again consider the
  families $\mathcal S_{n,r}$ as above, and for $x\subset n$ we let
  \[\mathcal S_x=\bigcap_{r\notin x}\mathcal S_{n,r}
  \]
  and
  \[\mathcal S=\bigcup\set{\mathcal S_x:\abs{x\cap n'}\leq m'-1}\;.
  \]
  Then $\mathcal S$ is $(n,m)$-splitting.  Indeed, given
  $B_1,\ldots,B_n$, for each $i$ there exists $r_i<n$ such that $B_i$
  has infinite intersection with $\bigcup_j I_{nj+r_i}$.  Since
  Equation~\eqref{eq:buckets} fails, some $m$ many
  $r_{i_1},\ldots,r_{i_m}$ must lie in some $x$ with $\abs{x\cap
    n'}\leq m'-1$ (see Figure~\ref{fig:buckets}).  Then some
  $c\in\mathcal S_x$ splits $B_{i_1},\ldots,B_{i_m}$.

  On the other hand, $\psi(\mathcal S)$ is not $(n',m')$-splitting,
  since by the construction no element of $\psi(\mathcal S)$ can split
  $m'$ many of the sets $A_r=\set{a_{nj+r}:j\in\omega}$ for $r<n'$.
  This completes the proof of the second claim of
  Proposition~\ref{prop:compute}.
\end{proof}

It is not hard to see from Equation~\eqref{eq:buckets} that if $n/m$
is much larger than $n'/m'$ then there will not be a morphism from
$\fr s_{n,m}$ to $\fr s_{n',m'}$.  This will allow us to show that
there are infinite antichains among the $\fr s_{n,m}$ in the Borel
Tukey order.

\begin{thm}
  \label{thm:ac}
  The triples which define $\fr s_{2^m,m}$, where $m$ varies over the
  natural numbers $\geq3$, form an antichain in the $\leq_{BT}$
  ordering.
\end{thm}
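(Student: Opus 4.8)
The plan is to deduce the theorem directly from the exact criterion established in Proposition~\ref{prop:compute}. Fix two distinct indices $a,b\geq 3$ and, by symmetry, assume $a<b$; it suffices to rule out a Borel Tukey morphism in each direction between the triples defining $\fr s_{2^a,a}$ and $\fr s_{2^b,b}$. The direction from $\fr s_{2^a,a}$ to $\fr s_{2^b,b}$ is immediate: here the source has second parameter $m=a$ and the target has second parameter $m'=b$, so $m<m'$, and the first bullet of Proposition~\ref{prop:compute} forbids a morphism outright.

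The substantive direction is from $\fr s_{2^b,b}$ to $\fr s_{2^a,a}$, where now $n=2^b$, $m=b$, $n'=2^a$, and $m'=a$, so that $m>m'$ and the second bullet applies. Since $2^a$ divides $2^b$ we have $\lfloor n/n'\rfloor=2^{b-a}$ and remainder $r=0$, so the left-hand side of Equation~\eqref{eq:buckets} is exactly $2^{b-a}(a-1)$. By the ``only if'' half of the second bullet, a morphism fails to exist precisely when this quantity is \emph{not} smaller than $m=b$; thus the whole theorem reduces to the elementary inequality
\[
2^{b-a}(a-1)\;\geq\;b\qquad\text{for all }3\leq a<b.
\]

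The only genuine content is this estimate, which I would prove by induction on $k=b-a\geq 1$ for fixed $a\geq 3$. The base case $k=1$ asks for $2(a-1)\geq a+1$, that is $a\geq 3$, which is exactly the standing hypothesis; the inductive step follows by doubling, since $2^{k}(a-1)\geq a+k$ yields $2^{k+1}(a-1)\geq 2(a+k)\geq a+(k+1)$. I expect the only delicate point to be this threshold rather than anything conceptual: the bound is sharp at $a=3$, $b=4$ (where the left-hand side equals $b$), and it genuinely fails at $a=2$, where one computes a left-hand side of $2<b$ and hence a spurious morphism. This is precisely why the antichain must be indexed by $m\geq 3$, so the restriction in the statement is not an artifact but an essential feature of the argument.
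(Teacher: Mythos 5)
Your proof is correct and takes essentially the same approach as the paper: both directions reduce to Proposition~\ref{prop:compute}, with the $m<m'$ case killed by the first bullet and the reverse direction by showing the left-hand side of Equation~\eqref{eq:buckets} is at least $b$ when $n=2^b$, $n'=2^a$ (so the equation fails). The only difference is cosmetic: you establish $2^{b-a}(a-1)\geq b$ by induction on $b-a$, while the paper deduces it from $2^k\geq k+1$ together with $AB\geq A+B$ for $A,B\geq 2$; both verifications are valid, and your observation that the bound is sharp at $a=3$, $b=4$ and fails at $a=2$, $b=3$ correctly explains the restriction to $m\geq 3$.
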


\begin{proof}
  If $m<m'$ then there is no Borel Tukey map from $\fr s_{2^m,m}$ to
  $\fr s_{2^{m'},m'}$ by the first claim in
  Proposition~\ref{prop:compute}.  If $m>m'$, we must evaluate whether
  Equation~\eqref{eq:buckets} holds with $n=2^m$ and $n'=2^{m'}$.  It
  is not difficult to see that this equation fails, since in this case
  $r=0$ and
  \begin{align*}
    \frac{2^m}{2^{m'}}(m'-1) &= 2^{m-m'}(m'-1)\\
    &\geq((m-m')+1)(m'-1)\\
    &\geq((m-m')+1)+(m'-1)\\
    &=m\;.
  \end{align*}
  (For the second inequality we note that $AB\geq A+B$ for
  $A,B\geq2$.)  Thus it follows from the second claim in
  Proposition~\ref{prop:compute} that there is again no Borel Tukey
  map from $\fr s_{2^m,m}$ to $\fr s_{2^{m'},m'}$.
\end{proof}

Finally, we can combine members of this countable antichain to produce
more complex patterns.

\begin{cor}
  The superset ordering on $\mathcal P(\omega)$ embeds into the
  $\leq_{BT}$ ordering on triples.  In fact, this ordering embeds into
  the $\leq_{BT}$ ordering on triples that define $\fr s$.
\end{cor}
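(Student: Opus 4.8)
The plan is to realize the embedding using a disjoint-sum operation on \vojtas\ triples, applied to the antichain of Theorem~\ref{thm:ac}. Re-index that antichain as $\set{\bm A_i:i\in\omega}$, so that the relevant index set is $\mathcal P(\omega)$, and fix in addition a base triple $\bm S$ defining $\fr s=\fr s_{1,1}$; recall from Proposition~\ref{prop:compute} that $\bm A_i\geq_{BT}\bm S$ while $\bm S\not\geq_{BT}\bm A_i$ for every $i$. For a countable family of triples $\set{\bm C_i}$, define the disjoint sum $\bigoplus_i\bm C_i$ to have negative part $\bigsqcup_i(C_i)_-$, positive part $\prod_i(C_i)_+$, and relation given by: an $x$ lying in the summand $(C_i)_-$ relates to $\seq{y_k}_k$ iff $x\mathrel{\mathsf C_i}y_i$. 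I would then assign to each $X\subseteq\omega$ the triple $\bm T_X\defeq\bm S\oplus\bigoplus_{i\in X}\bm A_i$ and claim that $X\mapsto\bm T_X$ is the required embedding of $(\mathcal P(\omega),\supseteq)$.

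Several properties are immediate or routine. The disjoint sum of Borel triples is again Borel, so each $\bm T_X$ is a Borel triple. There is a continuous projection morphism witnessing $\bm T_X\geq_{BT}\bm A_j$ for each $j\in X$ (include $(A_j)_-$ as a summand and project the product onto its $j$-th coordinate), and likewise $X\supseteq Y$ yields a projection morphism witnessing $\bm T_X\geq_{BT}\bm T_Y$. For the norm, $\bm T_X\geq_{BT}\bm S$ gives $\norm{\bm T_X}\geq\fr s$, while a diagonal argument gives $\norm{\bm T_X}\leq\fr s$: enumerating a minimal dominating family for each summand in order type $\fr s$ as $\set{a_i^\alpha:\alpha<\fr s}$, the family $\set{\seq{a_i^\alpha}_i:\alpha<\fr s}$ is $\bm T_X$-dominating and has size $\fr s$. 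Hence every $\bm T_X$ defines $\fr s$, as required by the second sentence of the corollary.

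With these in hand the embedding reduces entirely to the following \emph{independence} property of the antichain: for $j\notin X$ one has $\bm T_X\not\geq_{BT}\bm A_j$. Granting this, suppose $\bm T_X\geq_{BT}\bm T_Y$; then for each $j\in Y$ we have $\bm T_Y\geq_{BT}\bm A_j$, hence $\bm T_X\geq_{BT}\bm A_j$, hence $j\in X$ by independence, so $Y\subseteq X$. Together with the projection morphisms this gives $\bm T_X\geq_{BT}\bm T_Y\iff X\supseteq Y$, so the map is an order embedding into triples defining $\fr s$. (The base summand $\bm S$ only guarantees that $\bm T_\emptyset$ is a genuine $\fr s$-triple lying below every $\bm T_X$; since $\bm S\not\geq_{BT}\bm A_j$ it never interferes with independence.)

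The crux is therefore the independence property, and this is where I expect the real work. Since any morphism carries dominating families to dominating families, it suffices to show that no Borel map $\psi$ from the positive part $\prod_i 2^\omega$ of $\bm T_X$ into $2^\omega$ carries $\bm T_X$-dominating families to $(2^j,j)$-splitting families. A family $\mathcal F$ is $\bm T_X$-dominating exactly when each coordinate projection $\pi_i(\mathcal F)$ is $(2^i,i)$-splitting; here it is convenient to use the elementary fact that any non-meager subset of $2^\omega$ is $(n,m)$-splitting for all $m\leq n$, so it is enough to keep each projection non-meager. The plan is to run the Baire-category construction of Theorem~\ref{thm:sn} and Proposition~\ref{prop:compute}—assuming $\psi$ continuous on a comeager $G$, and building a partition $(I_k)$, integers $a_k$, and a prediction function $\theta$ forcing $\psi(\seq{c_i})(a_k)=1$ on the predicted part—but now predicting \emph{tuples} $\seq{c_i}_i$ rather than single reals, and taking unions of prediction families over all congruence moduli exactly as in the corollary separating $\fr s_{<\omega}$ from $\fr s_\sigma$. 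This should yield a family whose image cannot split $j$ of the diagonal sets $A_r=\set{a_{nk+r}:k\in\omega}$, contradicting $(2^j,j)$-splitting, while the union over moduli keeps every projection $\pi_i(\mathcal F)$ splitting enough to make $\mathcal F$ dominating. The main obstacle is the bookkeeping of the fusion across the possibly infinitely many coordinates of the product simultaneously with infinitely many moduli; the saving grace is that continuity of $\psi$ on $G$ forces each value $\psi(\seq{c_i})(a_k)$ to depend on only finitely many coordinates and finitely many bits, so coordinates and moduli can be introduced gradually within a single fusion.
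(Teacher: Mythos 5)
Your structural work is correct, and it is in fact close to the paper's own construction: the paper's triple for what it calls $\fr s_X$ keeps $2^\omega$ as the positive part and takes for the relation the union of the relations $\mathsf{S}_{2^m,m}$ for $m\in X$, so that dominating families are exactly the $X$-splitting families; your $\bm{T}_X$ is the same object with the positive part blown up to a countable product. The projection morphisms for $X\supseteq Y$, the norm computation $\norm{\bm{T}_X}=\fr s$, and the reduction of the whole corollary to the independence property ($j\notin X\Rightarrow\bm{T}_X\not\geq_{BT}\bm{A}_j$) are all fine. But note that the product is an avoidable complication: the diagonal map $\delta(c)=(c,c,\dots)$ satisfies $\pi_i(\delta(\mathcal S))=\mathcal S$ for every coordinate, so $\delta$ carries $X$-splitting families to $\bm{T}_X$-dominating families, and composing a putative Borel $\psi$ with $\delta$ yields a Borel map $2^\omega\to2^\omega$ carrying $X$-splitting families to $(2^j,j)$-splitting families. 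This eliminates the multi-coordinate fusion bookkeeping that you yourself flag as the main obstacle but never resolve, and it reduces your crux to the one-space statement that the paper actually proves.

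The genuine gap is in your sketch of that crux. You model it on the corollary separating $\fr s_{<\omega}$ from $\fr s_\sigma$, but there the target is $\sigma$-splitting, so it is enough to exhibit \emph{infinitely} many sets (the $A_r$ over all moduli) that no image element splits simultaneously. Here the target is $(2^j,j)$-splitting, so you must exhibit one \emph{fixed} tuple of $2^j$ sets such that no element of $\psi(\mathcal F)$ splits $j$ of them. Your $\mathcal F$ is a union over the moduli $m\in X$, and each piece of the union fails only against its own tuple $A_i^{(m)}=\{a_{kn_m+i}:k\in\omega\}$ (with $n_m$ the modulus used for that piece); different pieces fail against different tuples, and that does not contradict $(2^j,j)$-splitting. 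The paper closes exactly this hole with a compression step: because all the $A_i^{(m)}$ are congruence classes with the same residue $i$ inside one fixed sequence $(a_k)$, for each $i<2^j$ the countable family $\{A_i^{(m)}:m\in X\}$ admits a pseudo-intersection $A_i\subset^*A_i^{(m)}$, and any real splitting $A_i$ must split every $A_i^{(m)}$; the single tuple $(A_i)_{i<2^j}$ then witnesses that $\psi(\mathcal F)$ is not $(2^j,j)$-splitting. Without this step (or an equivalent device) your contradiction never materializes. A smaller point: your suggestion that it suffices to ``keep each projection non-meager'' cannot apply to the final family---the prediction families are nowhere dense and their projections need not be non-meager; non-meagerness is only usable for the intermediate sets $N_s\cap G$ inside the claim, while the final family's splitting properties must be argued structurally from the freedom on unpredicted levels, as in Theorem~\ref{thm:sn} and Proposition~\ref{prop:compute}.
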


\begin{proof}
  Let us work with $\mathcal P(\omega\smallsetminus3)$ in place of
  $\mathcal P(\omega)$.  For $X\subset\omega\smallsetminus3$, we say
  that a family $\mathcal S$ is \emph{$X$-splitting} iff it is
  $(2^m,m)$-splitting for all $m\in X$.  Clearly, the ``cardinals''
  $\fr s_X$ defined by the corresponding relation are all equal to
  $\fr s$.  Moreover, if $X\supset Y$ then $X$-splitting implies $Y$
  splitting and so there is a trivial morphism from $\fr s_X$ to $\fr
  s_Y$.

  Conversely, assume that there exists $m_0\in Y\smallsetminus X$, and
  suppose towards a contradiction that there exists a Borel map $\psi$
  which carries $X$-splitting families to $Y$-splitting families.  By
  Theorem~\ref{thm:ac} and Proposition~\ref{prop:compute}, for each
  $m\in X$ there exists a $(2^m,m)$-splitting family $\mathcal
  S^{(m)}$ and a sequence of sets $A_1^{(m)},\ldots,A_{2^{m_0}}^{(m)}$
  such that no element of $\psi(\mathcal S^{(m)})$ splits $m_0$ many
  of the $A_i^{(m)}$.

  Now, if $\mathcal S=\bigcup_{m\in X}S^{(m)}$, then clearly $\mathcal
  S$ is $X$-splitting.  We claim that $\psi(\mathcal S)$ is not
  $Y$-splitting, in fact that it is not even
  $(2^{m_0},m_0)$-splitting.  To see this, note that the proof of
  Proposition~\ref{prop:compute} implies that for each $i$,
  $A_i^{(m)}$ can be taken to be of the form
  $\set{a_{nj+i}:j\in\omega}$ for some $n$.  That is, the indices are
  taken from the $i\th$ congruence class modulo $n$.  It follows
  easily that there exists a single set $A_i$ such that
  $A_i\subset^*A_i^{(m)}$ for every $m\in X$.  Now, no element of
  $\psi(\mathcal S^{(m)})$ can split $m_0$ many of the $A_i$, since
  that would imply that it splits $m_0$ many of the $A_i^{(m)}$.
  Hence $\psi(\mathcal S)$ is not $(2^{m_0},m_0)$-splitting, as
  desired.
\end{proof}

\bibliographystyle{alpha}
\begin{singlespace}
  \bibliography{tukey}

\begin{thebibliography}{MHD04}

\bibitem[BJ95]{bart}
Tomek Bartoszy{\'n}ski and Haim Judah.
\newblock {\em Set theory}.
\newblock A K Peters Ltd., Wellesley, MA, 1995.
\newblock On the structure of the real line.

\bibitem[Bla94]{blass-evasion}
Andreas Blass.
\newblock Cardinal characteristics and the product of countably many infinite
  cyclic groups.
\newblock {\em J. Algebra}, 169(2):512--540, 1994.

\bibitem[Bla96]{blass-borel}
Andreas Blass.
\newblock Reductions between cardinal characteristics of the continuum.
\newblock In {\em Set theory ({B}oise, {ID}, 1992--1994)}, volume 192 of {\em
  Contemp. Math.}, pages 31--49. Amer. Math. Soc., Providence, RI, 1996.

\bibitem[Bla03]{blass}
Andreas Blass.
\newblock Combinatorial cardinal invariants.
\newblock In Matthew Foreman and Akihiro Kanamori, editors, {\em The handbook
  of set theory}. Springer, 2003.

\bibitem[CS11]{invar}
Samuel Coskey and Scott Schneider.
\newblock Borel cardinal invariant properties of countable {B}orel equivalence
  relations.
\newblock {\em Preprint}, 2011.

\bibitem[JN76]{naini}
Seyed-Assadollah Jalali-Naini.
\newblock {\em The Monotone Subsets of Cantor Space, Filters and Descriptive
  Set Theory}.
\newblock D.{P}hil.\ thesis, University of Oxford, 1976.

\bibitem[Kec95]{kechris}
Alexander~S. Kechris.
\newblock {\em Classical descriptive set theory}, volume 156 of {\em Graduate
  Texts in Mathematics}.
\newblock Springer-Verlag, New York, 1995.

\bibitem[MHD04]{parametrized}
Justin~Tatch Moore, Michael Hru{\v{s}}{\'a}k, and Mirna D{\v{z}}amonja.
\newblock Parametrized {$\diamondsuit$} principles.
\newblock {\em Trans. Amer. Math. Soc.}, 356(6):2281--2306, 2004.

\bibitem[Mil02]{mildenberger}
Heike Mildenberger.
\newblock No {B}orel connections for the unsplitting relations.
\newblock {\em MLQ Math. Log. Q.}, 48(4):517--521, 2002.

\bibitem[MS12]{malliaris}
Maryanthe Malliaris and Saharon Shelah.
\newblock Cofinality spectrum theorems in model theory, set theory, and general
  topology.
\newblock {\em Preprint}, 2012.

\bibitem[PR95]{paw}
Janusz Pawlikowski and Ireneusz Rec{\l}aw.
\newblock Parametrized {C}icho\'n's diagram and small sets.
\newblock {\em Fund. Math.}, 147(2):135--155, 1995.

\bibitem[Spi04]{spinas}
Otmar Spinas.
\newblock Analytic countably splitting families.
\newblock {\em J. Symbolic Logic}, 69(1):101--117, 2004.

\bibitem[Tal80]{talagrand}
Michel Talagrand.
\newblock Compacts de fonctions mesurables et filtres non mesurables.
\newblock {\em Studia Math.}, 67(1):13--43, 1980.

\bibitem[Voj93]{voj}
Peter Vojt\'{a}\v{s}.
\newblock Generalized {G}alois-{T}ukey connections between explicit relations
  on classical objects of real analysis.
\newblock In H.~Judah, editor, {\em Set Theory of the Reals}, volume~6 of {\em
  Israel Mathematical Conference Proceedings}, pages 619--643. Amer. Math.
  Soc., 1993.

\bibitem[Zap04]{zap}
Jind{\v{r}}ich Zapletal.
\newblock Descriptive set theory and definable forcing.
\newblock {\em Mem. Amer. Math. Soc.}, 167(793):viii+141, 2004.

\end{thebibliography}
\end{singlespace}

\end{document}